\documentclass[reqno,12pt]{amsart}
\usepackage{amsfonts,amsmath,amsthm,amssymb,amscd}
\usepackage[all]{xy}
\usepackage[vcentermath]{youngtab}
\usepackage[dvipdfm]{color}
\usepackage[dvipdfm,bookmarks=true,bookmarksnumbered=true,bookmarkstype=toc]{hyperref}

\sloppy

\setlength{\headheight}{15pt}
\setlength{\oddsidemargin}{-.3cm}
\setlength{\evensidemargin}{-.3cm}
\setlength{\textwidth}{16cm}
\setlength{\textheight}{20cm}

\newcommand{\version}{Ver.~0.0}
\newcommand{\setversion}[1]{\renewcommand{\version}{Ver.~{#1}}}
\setversion{0.1 [Thu Apr 19 12:36:51 JST 2007]}
\setversion{0.2 [Thu Nov 22 11:45:39 JST 2007]}
\setversion{0.3 [2008/06/20 23:03]}
\setversion{0.4 [Thu Dec 25 17:05:05 JST 2008]}
\setversion{0.8 [2009/09/10 15:10]}
\setversion{1.0 [2009/09/10 17:02]}
\setversion{1.1 [2010/11/28 12:14]}
\setversion{1.2 [2010/11/28 12:14]}

\title [Asymptotic Cone for Symmetric Pair]
{Asymptotic cone of semisimple orbits for symmetric pairs}


\author{Kyo Nishiyama}
\address{
Department of Physics and Mathematics\\
Aoyama Gakuin University\\
Fuchinobe 5-10-1, Sagamihara 252-5258, Japan}
\email{kyo@gem.aoyama.ac.jp}

\subjclass[2000]{Primary 14M15, 14E15; Secondary 14L30, 22E47, 20G05}
\keywords{asymptotic cone, Richardson orbit, nilpotent orbit, symmetric pair, degenerate principal series}

\thanks{Supported by JSPS Grant-in-Aid for Scientific Research \# 21340006.}


\theoremstyle{plain}
\newtheorem{theorem}{Theorem}
\newtheorem{proposition}[theorem]{Proposition}
\newtheorem{corollary}[theorem]{Corollary}
\newtheorem{lemma}[theorem]{Lemma}

\newtheorem{problem}[theorem]{\upshape Problem}
\theoremstyle{definition}
\newtheorem{definition}[theorem]{Definition}

\theoremstyle{remark}
\newtheorem{remark}[theorem]{\upshape Remark}

%
\numberwithin{equation}{section}
\numberwithin{theorem}{section}

\newcommand{\Z}{\mathbb{Z}}

\newcommand{\R}{\mathbb{R}}

\newcommand{\C}{\mathbb{C}}

\newcommand{\bbP}{\mathbb{P}}

\newcommand{\lie}[1]{\mathfrak{#1}}
\newcommand{\lier}[1]{\mathfrak{#1}_{\R}}

\newcounter{thmenum}
\newenvironment{thmenumerate}{%
\smallskip
\begin{list}{$(\thethmenum)$}{%
\usecounter{thmenum}
\setlength{\labelsep}{.5em}
\setlength{\labelwidth}{-7pt}
\setlength{\topsep}{0pt}
\setlength{\partopsep}{0pt}
\setlength{\parsep}{0pt}
\setlength{\leftmargin}{3pt}
\setlength{\rightmargin}{0pt}
\setlength{\itemindent}{\leftmargin}
\setlength{\itemsep}{0pt}
}}
{\end{list}\smallskip}

\newcounter{thmenumroman}
\renewcommand{\thethmenumroman}{\roman{thmenumroman}}


\newcommand{\mycomment}[1]{} 

\newlength{\lengthcup}
\settowidth{\lengthcup}{$\cup$}

\newcommand{\mathoperator}[1]{\qopname\relax o{#1}}

\newcommand{\grade}{\qopname\relax o{gr}}

\newcommand{\vspan}{\qopname\relax o{span}}

\newcommand{\Lie}{\qopname\relax o{Lie}}

\newcommand{\Ind}{\qopname\relax o{Ind}}

\newcommand{\Ad}{\qopname\relax o{Ad}}
\newcommand{\ad}{\mathop{\mathrm{ad}}\nolimits{}}
\newcommand{\codim}{\qopname\relax o{codim}}

\newcommand{\iunit}{\mbox{\small$\sqrt{-1\,}\,$}}

\newcommand{\transpose}[1]{{}^t{#1}}


\newcommand{\AssVar}{\mathop\mathcal{AV}\nolimits{}}
\newcommand{\AssCycle}{\mathop\mathcal{AC}\nolimits{}}
\newcommand{\closure}[1]{\overline{#1}}

\newcommand{\trivial}{\mathbf{1}}

\newcommand{\conjugate}[1]{\overline{\rule{0pt}{1.2ex} #1}}

\newcommand{\irreps}[1]{\mathrm{Irr}(#1)}
\newcommand{\restrict}{\big|}
\newcommand{\partition}{\mathcal{P}}

\newcommand{\orbit}{\mathbb{O}}  %
\newcommand{\calorbit}{\mathcal{O}}  %
  %


\newcommand{\GL}{\mathrm{GL}}

\newcommand{\U}{\mathrm{U}}

\newcommand{\Mat}{\mathrm{M}}

\newcommand{\nullcone}{\lie{N}}
\newcommand{\nullconeof}[1]{\nullcone(#1)}

\newcommand{\nilpotents}{\mathcal{N}}

\newcommand{\composit}{\odot}

\makeatletter
\newcommand{\adots}{\mathinner {\mkern 1mu\raise \p@ \vbox 
{\kern 7\p@ \hbox {.}}\mkern 2mu \raise 4\p@ \hbox {.}\mkern 2mu\raise 7\p@ \hbox {.}\mkern 1mu}}
\makeatother

\newcommand{\tensor}{\otimes}


\newcommand{\idealof}[1]{\mathbb{I}({#1})}

\newcommand{\projof}[1]{\mathbb{P}({#1})}
\newcommand{\proj}{\mathbb{P}}

\newcommand{\Acone}[1]{\mathfrak{C}^{\bbP}({#1})}
\newcommand{\affAcone}[1]{\mathfrak{C}({#1})}

\newcommand{\injection}{\hookrightarrow}
\newcommand{\surjection}{\twoheadrightarrow}

\newcommand{\llg}{\lie{g}}
\newcommand{\llk}{\lie{k}}
\newcommand{\lls}{\lie{s}}
\newcommand{\llp}{\lie{p}}

\newcommand{\llu}{\lie{u}}

\newcommand{\tds}{\mathfrak{sl}_2}

\newcommand{\GR}{G_{\R}}
\newcommand{\KR}{K_{\R}}
\newcommand{\MMR}{M_{\R}}
\newcommand{\PR}{P_{\R}}
\newcommand{\LR}{L_{\R}}
\newcommand{\NR}{N_{\R}}
\newcommand{\AR}{A_{\R}}
\newcommand{\gR}{\lie{g}_{\R}}
\newcommand{\sR}{\lie{s}_{\R}}

\newcommand{\Korbit}{\orbit^{K}}
\newcommand{\GRorbit}{\orbit^{\GR}}
\newcommand{\Gorbit}{\orbit^{G}}

\newcommand{\flagvariety}{\mathfrak{B}}

\newcommand{\forbit}{\mathcal{O}}

\newcommand{\cspan}[1]{\vspan_{\C}\{ #1 \}}


\begin{document}

\begin{abstract}
Let $ G $ be a reductive algebraic group over $ \C $ and denote its Lie algebra by $ \lie{g} $.  
Let $ \orbit_h $ be a closed $ G $-orbit through a semisimple element $ h \in \lie{g} $.  
By a result of Borho and Kraft \cite{Borho.Kraft.1979}, it is known that 
the asymptotic cone of 
the orbit $ \orbit_h $ is the closure of a Richardson nilpotent orbit corresponding to a parabolic subgroup 
whose Levi component is the centralizer $ Z_G(h) $ in $ G $.  
In this paper, we prove an analogue on a semisimple orbit for a symmetric pair.  

More precisely, 
let $ \theta $ be an involution of $ G $, and $ K = G^{\theta} $ a fixed point subgroup of $ \theta $.  
Then we have a Cartan decomposition $ \lie{g} = \lie{k} + \lie{s} $ of the Lie algebra $ \lie{g} = \Lie(G) $ 
which is the eigenspace decomposition of $ \theta $ on $ \lie{g} $.  
Let $ \{ x, h, y \} $ be a normal $ \tds $ triple, where $ x, y \in \lie{s} $ is nilpotent, and $ h \in \lie{k} $ semisimple.  
In addition, we assume $ \conjugate{x} = y $, where $ \conjugate{x} $ denotes the complex conjugation which commutes with $ \theta $.  
Then $ a = \iunit (x - y) $ is a semisimple element in $ \lie{s} $, and we can consider 
a semisimple orbit $ \Ad (K)\, a $ in $ \lie{s} $, which is closed.  
Our main result asserts that 
the asymptotic cone of $ \Ad (K) \, a $ in $ \lie{s} $ coincides with $ \closure{\Ad (G) \, x \cap \lie{s}} $, 
if $ x $ is even nilpotent.
\end{abstract}

\maketitle


\section*{Introduction}

Let $ G $ be a connected reductive algebraic group over $ \C $ and denote its Lie algebra by $ \lie{g} $.  
Let $ h \in \lie{g} $ be a semisimple element and denote by $ \orbit_h $ the adjoint $ G $-orbit through $ h $.  
It is a closed affine subvariety in $ \lie{g} $.  
With this semisimple orbit, we can associate two objects.  

One object is a nilpotent orbit called a Richardson orbit.  
To be more precise, let us consider the centralizer $ L := Z_G(h) $ of $ h $.  
Then, there is a parabolic subgroup $ P $ whose Levi component is $ L $.  
Let us denote a Levi decomposition of the Lie algebra $ \lie{p} $ by 
$ \lie{l} + \lie{u} $, where $ \lie{u} $ denotes the nilpotent radical of $ \lie{p} $.  
Then $ \Ad(G) \lie{u} $ is the closure of a single nilpotent orbit $ \calorbit $, which 
is called the Richardson orbit associated with $ P $.  
The Richardson orbit $ \calorbit $ in fact does not depend on the choice of the parabolic $ P $, 
and it is determined by $ h $.

The other object, which we consider, is the asymptotic cone 
$ \affAcone{\orbit_h} $ of $ \orbit_h $, which indicates the asymptotic direction in which 
the variety $ \orbit_h $ spreads out.  See \S \ref{section:asymptotic.cone} for precise definition.  

In \cite{Borho.Kraft.1979}, Borho and Kraft studied Dixmier sheets, and in the course of their study they proved 
the following theorem.  

\begin{theorem}[Borho-Kraft]
For a semisimple orbit $ \orbit_h $, the asymptotic cone $ \affAcone{\orbit_h} $ 
coincides with the closure of the Richardson nilpotent orbit $ \closure{\calorbit} $ above.
\end{theorem}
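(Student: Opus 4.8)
The plan is to prove the two inclusions $\closure{\calorbit}\subseteq\affAcone{\orbit_h}$ and $\affAcone{\orbit_h}\subseteq\closure{\calorbit}$ directly, using the description of the asymptotic cone by limit directions: a nonzero $v\in\lie g$ lies in $\affAcone{\orbit_h}$ precisely when there are $z_n\in\orbit_h$ and $t_n\in\C^{\times}$ with $t_n\to 0$ and $t_nz_n\to v$; in algebraic terms this amounts to a $\C((t))$-point $z(t)$ of $\orbit_h$ together with $n\ge 1$ such that $t^{n}z(t)$ lies in the standard lattice and reduces to $v$ at $t=0$. I will use throughout that $\affAcone{\orbit_h}$ is automatically a closed cone, and that it is $\Ad(G)$-stable because each $\Ad(g)$ is a linear automorphism of $\lie g$ and hence commutes with scaling and with passing to limits, while $\Ad(G)\orbit_h=\orbit_h$.

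\emph{The inclusion $\closure{\calorbit}\subseteq\affAcone{\orbit_h}$.} The first step is to show $h+\lie u\subseteq\orbit_h$. One has $\Ad(P)h\subseteq h+\lie u$: writing an element of $P$ as $\ell\exp(X)$ with $\ell\in L$ and $X\in\lie u$, the expansion $\Ad(\exp X)h=h+[X,h]+\tfrac12[X,[X,h]]+\cdots$ lies in $h+\lie u$ because $[\lie u,\lie l]\subseteq\lie u$ and $[\lie u,\lie u]\subseteq\lie u$, and then $\Ad(\ell)(h+Y)=h+\Ad(\ell)Y\in h+\lie u$ because $h$ is central in $\lie l=\Lie(L)$ and $\lie u$ is $L$-stable. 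Since $Z_P(h)=Z_G(h)=L$, the dimension formula for the $P$-orbit of $h$ gives $\dim\Ad(P)h=\dim P-\dim L=\dim\lie u$, so $\Ad(P)h$ is dense in the irreducible affine space $h+\lie u$; as $\orbit_h$ is closed, $h+\lie u=\closure{\Ad(P)h}\subseteq\orbit_h$. Now for $u\in\lie u$ and $s\in\C^{\times}$ we have $h+su\in h+\lie u\subseteq\orbit_h$, and $s^{-1}(h+su)=s^{-1}h+u\to u$ as $s\to\infty$ while $s^{-1}\to 0$; hence $u\in\affAcone{\orbit_h}$. Thus $\lie u\subseteq\affAcone{\orbit_h}$, and by $\Ad(G)$-stability and closedness $\closure{\calorbit}=\closure{\Ad(G)\lie u}\subseteq\affAcone{\orbit_h}$.

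\emph{The inclusion $\affAcone{\orbit_h}\subseteq\closure{\calorbit}$.} Let $0\ne v\in\affAcone{\orbit_h}$, say $v=\lim t_nz_n$ with $z_n=\Ad(g_n)h$, $g_n\in G$, $t_n\to 0$. Using the Iwasawa-type decomposition $G=KP$ with $K$ a maximal compact subgroup, write $g_n=k_np_n$; since $\Ad(P)h\subseteq h+\lie u$, put $\Ad(p_n)h=h+u_n$ with $u_n\in\lie u$, so that $t_nz_n=t_n\Ad(k_n)h+\Ad(k_n)(t_nu_n)$. Passing to a subsequence, $k_n\to k_\infty\in K$; then $\{\Ad(k_n)h\}$ is bounded, hence $t_n\Ad(k_n)h\to 0$ and therefore $\Ad(k_n)(t_nu_n)\to v$. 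Applying $\Ad(k_n)^{-1}\to\Ad(k_\infty)^{-1}$ gives $t_nu_n\to\Ad(k_\infty)^{-1}v=:w$, and since $\lie u$ is a closed linear subspace $w\in\lie u$; therefore $v=\Ad(k_\infty)w\in\Ad(G)\lie u=\closure{\calorbit}$. The same argument runs algebraically: write a $\C((t))$-point of $\orbit_h$ as $\Ad(g(t))h$ and use the Iwasawa decomposition $G(\C((t)))=G(\C[[t]])\cdot P(\C((t)))$; the factor in $G(\C[[t]])$ preserves the standard $\C[[t]]$-lattice in $\lie g$, so its contribution $t^{n}\Ad(k(t))h$ vanishes at $t=0$, while the reduction mod $t$ of $k(t)$ plays the role of $k_\infty$. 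Combining the two inclusions, $\affAcone{\orbit_h}=\closure{\calorbit}$.

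The step that needs the most care is the interface with the definition of the asymptotic cone adopted in \S\ref{section:asymptotic.cone}: one must justify the limit-direction description and decide whether to argue with convergent sequences in the classical topology or, equivalently, with $\C((t))$-points and the valuative criterion — in the latter form the only extra ingredient is the Iwasawa decomposition of $G$ over the local field $\C((t))$. Beyond that I anticipate no real obstacle; note finally that the argument is set-theoretic, which is exactly what is needed, since the assertion identifies $\affAcone{\orbit_h}$ as a closed subvariety of $\lie g$.
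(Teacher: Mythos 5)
Your proof is correct; note that the paper itself states this theorem without proof, citing \cite{Borho.Kraft.1979} (both in the Introduction and as Theorem \ref{theorem:Borho.Kraft}), so there is no in-text argument to compare against and your write-up has to stand on its own — which it does. The key geometric input, $h+\lie{u}\subseteq\orbit_h$ via $\closure{\Ad(P)h}=h+\lie{u}$ (using $Z_P(h)=Z_G(h)=L$, the dimension count $\dim\Ad(P)h=\dim\lie{u}$, and closedness of the semisimple orbit), is exactly the lemma Borho and Kraft also rely on, and the scaling $s^{-1}(h+su)\to u$ then gives $\lie{u}\subseteq\affAcone{\orbit_h}$, hence $\closure{\calorbit}=G\lie{u}\subseteq\affAcone{\orbit_h}$ by $G$-stability and closedness. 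Where you genuinely diverge from the original is the reverse inclusion: Borho--Kraft place $\affAcone{\orbit_h}$ inside the boundary of the closure of the sheet $\bigcup_{t\in\C^{\times}}\orbit_{th}=\Ad(G)(\C^{\times}h+\lie{u})$, whose fibre at $t=0$ is $G\lie{u}$, and conclude by equidimensionality ($\dim\affAcone{\orbit_h}=\dim\orbit_h=2\dim\lie{u}=\dim\calorbit$) together with irreducibility of $\closure{\calorbit}$; you instead write $g_n=k_np_n$ using $G=KP$ (transitivity of a maximal compact subgroup on $G/P$), extract a convergent subsequence of $(k_n)$, and close up inside the linear subspace $\lie{u}$. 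This is a legitimate transcendental shortcut that avoids the dimension count entirely; its only cost is the interface you correctly flag, namely that the Zariski closure defining $\Acone{\orbit_h}$ agrees with the classical closure of the constructible set $\iota(\orbit_h)$, which is standard. The algebraic trade-off is real, though: the original route is purely algebraic and delivers as by-products the facts (equidimensionality of the asymptotic cone, its position in the null-cone) that the present paper quotes and reuses in its Propositions, whereas your sequential argument proves only the set-theoretic identity — which is all the stated theorem asks for.
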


This can be interpreted as a generalization of Kostant's theorem, 
which asserts that the nilpotent variety $ \nilpotents(\lie{g}) $ is 
a deformation of the regular semisimple orbits (\cite{Kostant.1963}).  
Note that $ \nilpotents(\lie{g}) $ is the closure of a principal nilpotent orbit, 
which is a Richardson orbit associated with a Borel subgroup.  
In this case, the ``deformation" amounts to taking an asymptotic cone of regular semisimple orbits. 

In this paper, we prove an analogous theorem for a semisimple orbit for a symmetric pair.  

Let us explain it more precisely.  
Let $ \theta $ be an involution of $ G $, and $ K = G^{\theta} $ a fixed point subgroup of $ \theta $.  
Then we have a Cartan decomposition $ \lie{g} = \lie{k} + \lie{s} $ of the Lie algebra $ \lie{g} = \Lie(G) $ 
which is the eigenspace decomposition of $ \theta $ on $ \lie{g} $.  
We pick a nilpotent element $ x $ in $ \lie{s} $, and 
consider a normal $ \tds $ triple $ \{ x, h, y \} $, where $ x, y \in \lie{s} $ is nilpotent, and $ h \in \lie{k} $ semisimple.  
In addition, we can assume $ \conjugate{x} = y $ without loss of generality, 
where $ \conjugate{x} $ denotes the complex conjugation which commutes with $ \theta $.  
Then $ a = \iunit (x - y) $ is a semisimple element in $ \lier{s} $, and we can consider 
a semisimple orbit $ \Korbit_a = \Ad (K)\, a $ in $ \lie{s} $, which is closed.  

Our main result asserts that, 
if $ x $ is even nilpotent, 
the asymptotic cone of $ \Korbit_a $ in $ \lie{s} $ coincides with $ \closure{\Gorbit_x \cap \lie{s}} $, 
where $ \Gorbit_x = \Ad(G) x $ is a nilpotent $ G $-orbit through $ x $.  
In fact, the intersection $ \Gorbit_x \cap \lie{s} $ breaks up into several nilpotent $ K $-orbits, 
\begin{equation*}
\Gorbit_x \cap \lie{s} = \bigcup\nolimits_{i = 0}^{\ell} \Korbit_{x_i} ,
\end{equation*}
each of which is a Lagrangian subvariety of $ \Gorbit_x $.  
So we can state our main theorem as

\begin{theorem}
Suppose $ x \in \lie{s} $ is an even nilpotent element, and 
construct a semisimple element $ a \in \lier{s} $ as explained above.  
Then the asymptotic cone of the semisimple orbit $ \Korbit_a $ in $ \lie{s} $ 
is given by 
\begin{equation*}
\affAcone{\Korbit_a} = \closure{\Gorbit_x \cap \lie{s}} = \bigcup\nolimits_{i = 0}^{\ell} \closure{\Korbit_{x_i}} .
\end{equation*}
\end{theorem}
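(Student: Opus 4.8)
The plan is to trap $\affAcone{\Korbit_a}$ between an explicit nilpotent variety from above and from below, and then read off its irreducible components. Recall throughout that $\affAcone{\Korbit_a}$ is a closed $\Ad(K)$-stable cone, and that it is the special fibre of a flat one-parameter degeneration of $\closure{\Korbit_a}$.

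\emph{Upper bound.} First I would check that $\affAcone{\Korbit_a}$ consists of nilpotent elements: for homogeneous $f\in\C[\lie{s}]^{K}$ of positive degree, $f$ is constant on the single orbit $\Korbit_a$, so $f-f(a)\in\idealof{\Korbit_a}$ has leading form $f$, whence $f$ vanishes on $\affAcone{\Korbit_a}$; as such $f$ cut out the nilpotent cone $\nilpotents(\lie{s})$ of the symmetric pair, $\affAcone{\Korbit_a}\subseteq\nilpotents(\lie{s})$. Next, since $\iunit(x-y)$ is $\Ad(G)$-conjugate to $h$ inside $\cspan{x,h,y}\cong\tds$, we have $\Ad(G)a=\Ad(G)h=\orbit_h$, so monotonicity of the asymptotic cone together with the theorem of Borho and Kraft gives $\affAcone{\Korbit_a}\subseteq\affAcone{\orbit_h}\cap\lie{s}=\closure{\calorbit}\cap\lie{s}$, where $\calorbit$ is the Richardson orbit of the parabolic $\lie{q}=\bigoplus_{j\ge 0}\lie{g}_j$ with Levi $Z_G(h)$ and $\lie{g}_j$ is the $\ad h$-eigenspace. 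Here the hypothesis that $x$ is even enters: then the nilradical of $\lie{q}$ is $\bigoplus_{j\ge 2}\lie{g}_j$, and since $\ad x\colon\lie{g}_j\to\lie{g}_{j+2}$ is onto for $j\ge 0$ one gets $[\lie{q},x]=\bigoplus_{j\ge 2}\lie{g}_j$, i.e. $x$ is a Richardson element of $\lie{q}$; hence $\calorbit=\Gorbit_x$ and $\affAcone{\Korbit_a}\subseteq\closure{\Gorbit_x}\cap\lie{s}$.

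\emph{Lower bound and dimension.} Let $\phi\colon\Cbatsu\to K$ be the cocharacter with $\mathrm{d}\phi(1)=h$ (it lands in $K$ because $h\in\lie{k}$). Then $t\mapsto\Ad(\phi(t))a=\iunit(t^{2}x-t^{-2}y)$ is a curve in $\Korbit_a$ with $t^{-2}\Ad(\phi(t))a\to\iunit x$ as $t\to\infty$, so $\iunit x\in\affAcone{\Korbit_a}$. As $\iunit x=\Ad(\phi(s))x$ for $s$ with $s^{2}=\iunit$, we have $\iunit x\in\Korbit_x$, and since $\affAcone{\Korbit_a}$ is a closed $\Ad(K)$-stable cone we obtain $\closure{\Korbit_x}\subseteq\affAcone{\Korbit_a}$. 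Combining this with the upper bound and with $\dim(\closure{\Gorbit_x}\cap\lie{s})=\tfrac12\dim\Gorbit_x=\dim\Korbit_{x_i}$ — the $\Korbit_{x_i}$ being Lagrangian in $\Gorbit_x$, while lower $G$-orbits contribute isotropic, hence strictly smaller, $K$-orbits — pins down $\dim\affAcone{\Korbit_a}=\tfrac12\dim\Gorbit_x$.

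\emph{Components, and the reverse inclusion (main obstacle).} Now $\closure{\Korbit_a}$ is a finite union of closures of $K^{\circ}$-orbits, all of the same dimension, hence equidimensional; therefore its flat degeneration $\affAcone{\Korbit_a}$ is equidimensional, of dimension $\tfrac12\dim\Gorbit_x$. As it lies in $\closure{\Gorbit_x}\cap\lie{s}$ and the only $K$-orbits of that dimension there are the $\Korbit_{x_i}$, each irreducible component of $\affAcone{\Korbit_a}$ equals some $\closure{\Korbit_{x_i}}$; so $\affAcone{\Korbit_a}=\bigcup_{i\in S}\closure{\Korbit_{x_i}}$ for a nonempty $S\subseteq\{0,\dots,\ell\}$. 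It remains to prove $S=\{0,\dots,\ell\}$, and this is where I expect the real difficulty to lie. For each $i$, choose a normal $\tds$-triple $\{x_i,h_i,y_i\}$ with $x_i,y_i\in\lie{s}$, $h_i\in\lie{k}$, and set $a_i=\iunit(x_i-y_i)\in\lie{s}$; the curve argument above gives $\iunit x_i\in\affAcone{\Korbit_{a_i}}$, so it suffices to show $a_i\in\Korbit_a$, i.e. that $a_i$ and $a$ are $\Ad(K)$-conjugate. Since the $\tds$-triples lying over the single nilpotent $G$-orbit $\Gorbit_x$ are mutually $\Ad(G)$-conjugate, all the $a_i$ are $\Ad(G)$-conjugate to $a$, and each $a_i$ is hyperbolic ($\ad a_i$ has the same integral spectrum as $\ad h$). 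The crux is to upgrade this $\Ad(G)$-conjugacy to $\Ad(K)$-conjugacy: one brings all the $a_i$ into a common Cartan subspace $\lie{a}\subseteq\lie{s}$ (unique up to $\Ad(K)$) and must show the residual ambiguity is realized by the little Weyl group $N_K(\lie{a})/Z_K(\lie{a})$ rather than the full Weyl group. I anticipate that the evenness of $x$ re-enters here, for instance through the auxiliary involution $\sigma=\theta\circ\exp(\tfrac{\pi\iunit}{2}\ad h)$ acting on $\lie{g}$ — which is an involution precisely because $\exp(\pi\iunit\,\ad h)=\id$ when $x$ is even, and which fixes the triple $\{x,h,y\}$ — or, alternatively, that one argues by matching the $K$-orbits in $\Gorbit_x\cap\lie{s}$ bijectively with the $K^{\circ}$-orbits in $\Korbit_a$ compatibly with the asymptotic-cone construction, which then forces every $\closure{\Korbit_{x_i}}$ to occur.
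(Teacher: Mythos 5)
Your proof follows essentially the same route as the paper: the lower bound $\closure{\Korbit_x}\subset\affAcone{\Korbit_a}$ via the curve $t\mapsto\Ad(\exp(t\,\ad h))a$, the upper bound $\affAcone{\Korbit_a}\subset\closure{\Gorbit_x}\cap\lie{s}$ via monotonicity and Borho--Kraft, equidimensionality to pin the components, and, to catch all of the $\Korbit_{x_i}$, building $a_i=\iunit(x_i-y_i)$ for each and reducing to showing $a_i$ is $\Ad(K)$-conjugate to $a$. You correctly isolate that last conjugacy as the only real issue, but then stop short of closing it, and speculate that evenness of $x$ must re-enter. That speculation is misplaced: evenness is used only to identify $\Gorbit_x$ as the Richardson $G$-orbit of $P$ (equivalently $\Korbit_x$ as a symmetric-pair Richardson orbit), not in the conjugacy step.

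The paper closes the gap in one line: after $\Ad(K)$-conjugating all the $a_i$ into a fixed maximal abelian subspace $\lier{a}\subset\sR$, they are $G$-conjugate (as semisimple parts of $G$-conjugate $\tds$-triples), hence conjugate under $N_G(\lie{a})/Z_G(\lie{a})$, which is the restricted (little) Weyl group of the pair. By Knapp's Corollary 6.55 this group is represented inside $\KR\subset K$, so the $a_i$ lie in a single $\Ad(K)$-orbit. This is precisely the statement you say ``must be shown'' about the residual ambiguity landing in $N_K(\lie{a})/Z_K(\lie{a})$, and it holds without any reference to evenness or to the auxiliary involution $\theta\circ\exp(\tfrac{\pi\iunit}{2}\ad h)$. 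With that citation in hand, your argument is complete and agrees with the paper's.
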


Note that the asymptotic cone is no longer irreducible in the case of symmetric pair.  
This reflects the reducibility of the nilpotent variety for symmetric pairs 
as pointed out by \cite{Kostant.Rallis.1971}.  
Our theorem can be seen as a generalization of Kostant-Rallis's theorem.

\medskip

From the semisimple element $ a \in \lier{s} $, 
we can construct a real parabolic subgroup $ \PR $ in a standard way (see \S \ref{section:main.theorem}).  
The asymptotic cone above is the associated variety of a degenerate principal series representation 
$ \Ind_{\PR}^{\GR} \chi $ induced from a character $ \chi $ of $ \PR $.  
It seems that the irreducible components $ \Korbit_{x_i} $ of $ \affAcone{\Korbit_a} $ play 
a important role in the theory of degenerate principal series representations.  
We discuss what we can expect for this, using an example in the case of $ \GR = U(n, n) $ in \S \ref{section:example.unn}.

\section{Asymptotic Cone}
\label{section:asymptotic.cone}

Let $ V = \C^N $ be a vector space.  
For a subvariety $ X \subset V $, we define the asymptotic cone of $ X $, denoted by $ \Acone{X} \subset \proj(V) $, as follows.  
We extend $ V $ by the one-dimensional vector space, and denote it by $ \tilde{V} = V \oplus \C $.  
We consider the projective space $ \proj(\tilde{V}) $.  
Then there is a natural open embedding $ \iota : V \hookrightarrow \proj(\tilde{V}) $ defined by $ \iota(v) = [ v \oplus 1 ] $, 
where $ [w] $ denotes the image of $ w \in \tilde{V} \setminus \{ 0 \} $ in $ \proj(\tilde{V}) $ under the natural projection.  
On the other hand, there is a closed embedding $ \kappa : \proj(V) \hookrightarrow \proj(\tilde{V}) $ 
which send $ [u] \in \proj(V) $ to $ \kappa([u]) := [ u \oplus 0 ] \in \projof{\tilde{V}} $.  
Thus we have a disjoint decomposition $ \projof{\tilde{V}} = \iota(V) \sqcup \kappa(\projof{V}) $.  
In the following, we identify $ \projof{V} $ with $ \kappa(\projof{V}) $ and 
consider it as a closed subvariety of $ \projof{\tilde{V}} $.

\begin{definition}
Let $ X $ be a subvariety of $ V $ of positive dimension. 
We define the asymptotic cone of $ X $ by $ \Acone{X} := \closure{\iota(X)} \cap \projof{V} $, 
where $ \projof{V} $ is identified with $ \kappa(\projof{V}) \subset \projof{\tilde{V}} $.  
Then $ \Acone{X} \subset \projof{V} $ is a projective variety of the same dimension as $ X $.  
The affine cone in $ V $ associated to $ \Acone{X} $ is denoted by $ \affAcone{X} $, and 
we call it the \emph{affine asymptotic cone}, while $ \Acone{X} $ is called the \emph{projective asymptotic cone}.

If $ X $ is $ 0 $-dimensional, i.e., if it consists of a finite set of points, 
we put $ \Acone{X} = \emptyset $ and $ \affAcone{X} = \{ 0 \} $.
\end{definition}

The asymptotic cone was introduced by W.~Borho and H.~Kraft (\cite{Borho.Kraft.1979}) 
to study Dixmier sheets of the adjoint representation of a reductive algebraic group.  
We refer the readers to \cite{Borho.Kraft.1979} for the details of their properties.  
Here in this section we only recall some properties of asymptotic cones without proof.

Let $ I $ be an ideal of the polynomial ring $ \C[V] $.  
For $ f \in I $, let $ \grade f $ be the homogeneous part of the maximal degree.  
We define $ \grade I = ( \grade f \mid f \in I ) $, 
the homogeneous ideal generated by $ \grade f \; ( f \in I ) $.  

Let $ \idealof{X} $ be the annihilator ideal of $ X $.  
Then the annihilator ideal of the asymptotic cone is given by 
$ \idealof{\affAcone{X}} = \sqrt{\grade \idealof{X}} $.  
Thus the regular function ring $ \C[\affAcone{X}] $ is isomorphic to $ \C[V] / \sqrt{\grade \idealof{X}} $, 
which is equal to the homogeneous function ring of $ \Acone{X} $. 

Let $ G $ be a connected reductive algebraic group over $ \C $ 
which acts linearly on $ V $ and assume that $ X $ is stable under $ G $.  
Then the ring of regular functions $ \C[X] $ has a natural $ G $-module structure.  
The asymptotic cone $ \Acone{X} $ as well as $ \affAcone{X} $ is also a $ G $-variety, 
and we have a $ G $-action on the regular function ring $ \C[\affAcone{X}] $ in particular.

\begin{lemma}
Let $ X $ be a closed affine variety in $ V $ which is stable under the action of $ G $, 
and $ I = \idealof{X} $ an annihilator ideal of $ X $.  
Then $ \C[X] \simeq \C[V] / I $ is isomorphic to $ \C[V] / \grade I $ as a $ G $-module.  
Since $ \C[\affAcone{X}] \simeq \C[V] / \sqrt{I} $, 
we have a surjective $ G $-module morphism $ \C[X] \surjection \C[\affAcone{X}] $.
\end{lemma}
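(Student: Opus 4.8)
The plan is to run the standard ``passage to leading forms'' argument at the level of the degree filtration on the polynomial ring, and to use the reductivity of $ G $ to move from a filtered module to its associated graded without changing the isomorphism class.

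First I would equip $ \C[V] = \bigoplus_{d \ge 0} \C[V]_d $ with the grading by total degree and the induced increasing filtration $ F_d \C[V] := \bigoplus_{j \le d} \C[V]_j $. Since $ G $ acts linearly on $ V $, it acts on $ \C[V] $ by graded algebra automorphisms, so each $ \C[V]_d $ and each $ F_d\C[V] $ is a finite-dimensional rational $ G $-module; as $ I $ is $ G $-stable it inherits the filtration $ F_d I := I \cap F_d \C[V] $, and the image $ F_d(\C[V]/I) $ of $ F_d\C[V] $ in $ \C[V]/I $ is a $ G $-stable, exhausting filtration of $ \C[V]/I $ by finite-dimensional modules. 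Next I would recall that $ \grade I $, the ideal generated by the leading forms $ \grade f $ $ (f \in I) $, coincides with the associated graded ideal $ \bigoplus_d \big( F_d I + F_{d-1}\C[V] \big) / F_{d-1}\C[V] \subset \C[V] $: every homogeneous element of the latter is a leading form of some element of $ I $, and that graded subspace is manifestly an ideal, so the two agree. Passing to degree-$ d $ parts, this yields a canonical $ G $-equivariant isomorphism
\[
F_d(\C[V]/I) \big/ F_{d-1}(\C[V]/I) \;\xrightarrow{\ \sim\ }\; \C[V]_d \big/ (\grade I)_d ,
\]
and summing over $ d $ gives an isomorphism $ \grade\big(\C[V]/I\big) \cong \C[V]/\grade I $ of graded $ G $-modules.

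The substantive step is to lift this to an isomorphism $ \C[V]/I \cong \grade(\C[V]/I) $ of $ G $-modules, i.e.\ to split the filtration $ G $-equivariantly. Here I would invoke complete reducibility: $ G $ is reductive over $ \C $, so every finite-dimensional rational $ G $-module is semisimple. Applied to the finite-dimensional modules $ F_d(\C[V]/I) $, this shows that each short exact sequence
\[
0 \longrightarrow F_{d-1}(\C[V]/I) \longrightarrow F_d(\C[V]/I) \longrightarrow \C[V]_d/(\grade I)_d \longrightarrow 0
\]
splits $ G $-equivariantly; choosing such splittings recursively produces $ G $-submodules $ W_d \cong \C[V]_d/(\grade I)_d $ with $ F_d(\C[V]/I) = \bigoplus_{j \le d} W_j $. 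Since the filtration is exhausting, $ \C[V]/I = \bigoplus_{d \ge 0} W_d \cong \bigoplus_d \C[V]_d/(\grade I)_d = \C[V]/\grade I $ as $ G $-modules. Combined with $ \C[X] = \C[V]/I $ (valid since $ X $ is closed, though radicality of $ I $ is not needed here), this proves the first assertion.

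For the surjection, recall from the discussion preceding the lemma that $ \idealof{\affAcone{X}} = \sqrt{\grade \idealof{X}} = \sqrt{\grade I} $, so $ \C[\affAcone{X}] \cong \C[V]/\sqrt{\grade I} $. The inclusion $ \grade I \subseteq \sqrt{\grade I} $ of $ G $-stable ideals induces a $ G $-equivariant surjection of algebras $ \C[V]/\grade I \surjection \C[V]/\sqrt{\grade I} $; precomposing with the isomorphism $ \C[X] \cong \C[V]/\grade I $ obtained above gives the asserted surjective $ G $-module morphism $ \C[X] \surjection \C[\affAcone{X}] $. I expect the only points requiring care to be the identification of the ``generated by leading forms'' description of $ \grade I $ with the associated-graded description, and the observation that the filtered pieces are finite-dimensional so that complete reducibility applies degree by degree; neither is a genuine obstacle, so the argument is essentially bookkeeping once these two facts are pinned down.
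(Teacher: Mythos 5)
Your proof is correct. The paper states this lemma without proof (it only ``recalls'' properties of asymptotic cones, deferring to Borho--Kraft), and your argument --- identifying $\grade I$ with the associated graded ideal of the degree filtration, deducing $\grade(\C[V]/I)\cong\C[V]/\grade I$, and splitting the filtration $G$-equivariantly by complete reducibility of finite-dimensional rational representations of the reductive group $G$ --- is exactly the standard argument the paper relies on; the only point worth flagging is that the ``$\sqrt{I}$'' in the statement is a typo for $\sqrt{\grade I}$, which you correctly use in the final surjection $\C[V]/\grade I\surjection\C[V]/\sqrt{\grade I}\simeq\C[\affAcone{X}]$.
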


Let $ \nullconeof{V} := \{ v \in V \mid f(v) = 0 \; (f \in \C[V]_+^G) \} $ be the null fiber.  
It is the zero locus of homogeneous $ G $-invariants of positive degree.  

\begin{proposition}
Let $ \orbit $ be a $ G $-orbit in $ V $.  
Then the affine asymptotic cone $ \affAcone{\orbit} $ is a $ G $-stable subvariety of $ \nullconeof{V} $, 
which is equidimensional and $ \dim \affAcone{\orbit} = \dim \orbit $.
\end{proposition}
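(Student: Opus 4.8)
The plan is to pass to the Zariski closure of $\orbit$ and then verify the three assertions separately: $G$-stability, the inclusion into $\nullconeof{V}$, and the dimension count. If $\orbit$ is a finite set of points the claim is immediate from the Definition (then $\affAcone{\orbit} = \{0\}$, which is $G$-fixed, lies in the null fiber, and has dimension $0 = \dim\orbit$), so assume $\dim\orbit \geq 1$. Set $X := \closure{\orbit}$. Since the asymptotic cone of a subvariety depends only on its closure, $\affAcone{\orbit} = \affAcone{X}$, and $X$ is an irreducible closed affine $G$-stable subvariety of $V$ with $\dim X = \dim\orbit$.

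For $G$-stability, let $G$ act on $\tilde{V} = V \oplus \C$ by $g\cdot(v,t) = (g v, t)$; then both $\iota$ and $\kappa$ are $G$-equivariant, so $\closure{\iota(X)}$ and the closed subvariety $\kappa(\projof{V})$ are $G$-stable. Hence $\Acone{X} = \closure{\iota(X)} \cap \projof{V}$ is $G$-stable, and therefore so is the affine cone $\affAcone{X}$. For the inclusion $\affAcone{X} \subseteq \nullconeof{V}$, take a homogeneous $f \in \C[V]^G$ of degree $d \geq 1$. Being $G$-invariant, $f$ is constant on $\orbit$, hence on $X = \closure{\orbit}$ by continuity; write $f \equiv c$ on $X$. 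Then $f - c \in I := \idealof{X}$ and its maximal-degree homogeneous part is $\grade(f-c) = f$, so $f \in \grade I \subset \sqrt{\grade I} = \idealof{\affAcone{X}}$ by the formula $\idealof{\affAcone{X}} = \sqrt{\grade\idealof{X}}$ recalled above; thus $f$ vanishes on $\affAcone{X}$. Since the homogeneous $G$-invariants of positive degree cut out $\nullconeof{V}$, this proves $\affAcone{X} \subseteq \nullconeof{V}$.

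It remains to treat the dimension. The variety $\closure{\iota(X)} \subset \projof{\tilde{V}}$ is irreducible projective of dimension $\dim X$, and it is not contained in the hyperplane at infinity $\kappa(\projof{V})$, because it contains the nonempty open subset $\iota(X) \subset \iota(V)$, which is disjoint from $\kappa(\projof{V})$. Hence $\Acone{X}$ is a proper hyperplane section of an irreducible projective variety; since $\dim X \geq 1$ it is nonempty, and by the dimension theorem for intersections with hypersurfaces in projective space it is equidimensional of dimension $\dim X - 1$. Taking the affine cone adds one to the dimension and preserves equidimensionality, so $\affAcone{X}$ is equidimensional with $\dim\affAcone{X} = \dim X = \dim\orbit$ — which is also precisely the dimension statement recorded in the Definition and in \cite{Borho.Kraft.1979}. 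The only step that is not pure bookkeeping is this equidimensionality, which rests on the purity of hyperplane sections of an irreducible projective variety not contained in the hyperplane; everything else follows mechanically from equivariance of $\iota$, $\kappa$ and from the formula for $\idealof{\affAcone{X}}$.
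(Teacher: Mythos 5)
Your argument is correct and complete. Note that the paper itself does not supply a proof of this proposition: Section \ref{section:asymptotic.cone} explicitly states that properties of asymptotic cones are recalled ``without proof,'' with a reference to Borho--Kraft \cite{Borho.Kraft.1979}. Your proof is essentially the standard one those sources use: reduce to $X = \closure{\orbit}$ (permissible since the closure in $\projof{\tilde V}$ only sees $\closure{\iota(\orbit)}$), get $G$-stability from the $G$-equivariance of $\iota$ and $\kappa$ under the obvious $G$-action on $\tilde V$, get the inclusion in $\nullconeof{V}$ from the observation that $\grade(f-c)=f$ for a positive-degree homogeneous invariant $f$ taking the constant value $c$ on $\orbit$, and get equidimensionality of dimension $\dim\orbit$ from the fact that $\Acone{X}$ is the hyperplane-at-infinity section of the irreducible projective variety $\closure{\iota(X)}$, which is not contained in that hyperplane. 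Each step is justified (irreducibility of $\orbit$ uses the paper's standing hypothesis that $G$ is connected; nonemptiness of the hyperplane section uses $\dim X\geq 1$), and the $0$-dimensional case is handled separately exactly as the paper's definition prescribes.
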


Let $ \lie{g} $ be a Lie algebra on which $ G $ acts by the adjoint action.  
Then the null fiber $ \nullconeof{\lie{g}} $ is called the nilpotent variety, which consists of 
all the nilpotent elements in $ \lie{g} $.  
It is well known that $ \nullconeof{\lie{g}} $ contains only a finite number of $ G $-orbits.  

\begin{corollary}\ 
For $ x \in \lie{g} $, let $ \orbit_x = \Ad(G) \, x $ be the adjoint orbit through $ x $.  
Then the affine asymptotic cone $ \affAcone{\orbit_x} $ is a finite union of 
the closure of nilpotent orbits, whose dimension is equal to $ \dim \orbit_x $.
\end{corollary}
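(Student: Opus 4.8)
The plan is to read the statement off the Proposition just proved, together with the finiteness of the set of nilpotent $ G $-orbits; we may assume $ \dim \orbit_x > 0 $, since for $ 0 $-dimensional $ \orbit_x $ the claim is immediate from the definition ($ \affAcone{\orbit_x} = \{ 0 \} = \closure{\Ad(G)\, 0} $, the closure of the zero nilpotent orbit).

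First I would apply the Proposition with $ V = \lie{g} $ under the adjoint action and $ \orbit = \orbit_x $: it gives that $ \affAcone{\orbit_x} $ is a $ G $-stable closed subvariety of the null fiber $ \nullconeof{\lie{g}} $, that it is equidimensional, and that $ \dim \affAcone{\orbit_x} = \dim \orbit_x $. Since $ G $ is reductive, $ \nullconeof{\lie{g}} $ is precisely the nilpotent variety, and by the remark preceding the statement it contains only finitely many $ G $-orbits.

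Next I would record the elementary fact that a closed $ G $-stable subset $ Y $ of a variety containing only finitely many $ G $-orbits is a finite union of orbit closures. Indeed, decompose $ Y = Y_1 \cup \dots \cup Y_r $ into its (finitely many) irreducible components; each $ Y_j $ is again $ G $-stable because $ G $ is connected. The $ G $-orbits meeting $ Y_j $ are finite in number, locally closed, and cover $ Y_j $, so exactly one of them, say $ \orbit^{(j)} $, is dense in $ Y_j $; hence $ Y_j = \closure{\orbit^{(j)}} $. Applying this with $ Y = \affAcone{\orbit_x} $ shows $ \affAcone{\orbit_x} = \bigcup_{j=1}^{r} \closure{\orbit^{(j)}} $, a finite union of closures of nilpotent orbits. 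Finally, the equidimensionality in the Proposition forces $ \dim Y_j = \dim \affAcone{\orbit_x} = \dim \orbit_x $ for each $ j $, so every $ \orbit^{(j)} $ is a nilpotent orbit of dimension $ \dim \orbit_x $ and $ \affAcone{\orbit_x} $ is of pure dimension $ \dim \orbit_x $.

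There is no genuine obstacle here: all the substance is carried by the Proposition. The only step requiring a moment of care is the standard lemma that an irreducible $ G $-stable closed set in a variety with only finitely many orbits is a single orbit closure, which holds because the finitely many locally closed orbits form a cover and hence one of them must be dense.
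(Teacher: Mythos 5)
Your proof is correct and is exactly the argument the paper has in mind: the Corollary is stated without proof (the paper recalls these facts from Borho--Kraft without proof) as an immediate consequence of the preceding Proposition together with the finiteness of nilpotent $G$-orbits, and you have supplied precisely the expected steps — equidimensionality and containment in $\nullconeof{\lie{g}}$ from the Proposition, $G$-stability of irreducible components via connectedness of $G$, and the standard lemma that a $G$-stable irreducible closed set in a variety with finitely many orbits is an orbit closure.
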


In the following, we will denote the adjoint action simply by 
$ g x = \Ad(g) x $ for $ g \in G , \, x \in \llg $.

\section{Richardson Orbit}
\label{section:Richardson.orbit}

Let $ h \in \llg $ be a semisimple element, and put $ L := Z_G(h) $ 
the centralizer of $ h $ in $ G $.  
There is a parabolic subgroup $ P $ with a Levi decomposition 
$ P = L U $, where $ U $ is the unipotent radical.  
Then 
$ \llp = \lie{l} \oplus \llu $ 
is a Levi decomposition of the corresponding Lie algebra.

\begin{definition}
Let $ \llu $ be the nilpotent radical of a parabolic subalgebra $ \llp $.  
Then adjoint translate $ G \llu = \{ \Ad(g) u \mid g \in G , u \in \llu \} $ of $ \llu $ is 
the closure of a single nilpotent orbit 
$ \closure{\orbit_x} \; (x : \text{nilpotent element}) $.  
We call $ \orbit_x $ the \emph{Richardson orbit} for the parabolic $ P $, and 
$ x $ a \emph{Richardson element}.  
We often assume $ x $ to be taken from $ \llu $.
\end{definition}

Let us consider a partial flag variety 
$ \flagvariety_P := G / P $ of all parabolics conjugate to $ \llp $, 
and denote by 
$ T^{\ast} \flagvariety_P $ the cotangent bundle over $ \flagvariety_P $.  
Then there is a $ G $-equivariant map $ \mu $ called the \emph{moment map} defined as follows.
\begin{equation*}
\mu : T^{\ast} \flagvariety_P \simeq G \times_P \llu \ni ( g, z ) \to \Ad (g) z \in \llg 
\end{equation*}

The following proposition is well known.  See \cite{Jantzen.2004} and references therein.

\begin{proposition}
Assume that $ x $ is a Richardson element for $ P $ and that $ Z_G(x) = Z_P(x) $ holds.
\begin{thmenumerate}
\item
The moment map 
$ \mu : T^{\ast} \flagvariety_P \to \closure{\orbit_x} $ is a resolution of singularities of $ \closure{\orbit_x} $.
\item
The fiber of $ \orbit_x $ is $ \mu^{-1}(\orbit_x) = G [e, x] $ and 
$ \mu : G [e, x] \xrightarrow{\sim} \orbit_x $ is an isomorphism.
\item
The moment map $ \mu $ induces a $ G $-equivariant isomorphism 
$ \C[ G \times_P \llu ] = \C[ G \times \llu]^P \simeq \C[ \orbit_x ] $.
In addition, 
if $ \closure{\orbit_x} $ is normal, then $ \C[ \closure{\orbit_x} ] = \C[\orbit_x] $ holds.
\end{thmenumerate}
\end{proposition}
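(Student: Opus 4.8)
The plan is to follow the classical argument (see \cite{Jantzen.2004}), treating parts (1) and (2) together and then deducing (3) by a properness-and-normality argument. Since $ G $ is connected, $ \flagvariety_P = G/P $ is a smooth irreducible projective variety and $ T^{\ast}\flagvariety_P \simeq G \times_P \llu $ is smooth and irreducible of dimension $ \dim(G/P) + \dim\llu = 2\dim\llu $. First I would record that $ [g,z] \mapsto (gP, \Ad(g)z) $ embeds $ G\times_P\llu $ as a closed subvariety (a sub-vector-bundle) of $ \flagvariety_P \times \llg $, so that $ \mu $ factors as this closed embedding followed by the projection $ \flagvariety_P\times\llg \to \llg $; as $ \flagvariety_P $ is complete, $ \mu $ is proper, hence its image is closed and equals $ G\llu = \closure{\orbit_x} $ by the definition of the Richardson orbit. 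Next I would pin down dimensions: by Richardson's theorem the Richardson element $ x\in\llu $ has dense $ P $-orbit in $ \llu $, so $ \dim Z_P(x) = \dim P - \dim\llu = \dim L $, and together with the hypothesis $ Z_G(x) = Z_P(x) $ this gives $ \dim\orbit_x = \dim G - \dim L = 2\dim\llu = \dim(G\times_P\llu) $. Thus $ \mu $ is a proper surjective morphism between irreducible varieties of equal dimension.

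For birationality I would show $ \orbit_x \cap \llu = P\cdot x $: any $ P $-orbit inside $ \orbit_x\cap\llu $ has dimension $ \le\dim\llu $ (it lies in $ \llu $) and $ \ge \dim P - \dim Z_G(x) = \dim\llu $ (all points of $ \orbit_x $ have conjugate $ G $-centralizers and $ Z_P(x)\subseteq Z_G(x) $), hence is dense in the irreducible variety $ \llu $; two such orbits would therefore meet and so coincide. Writing $ G\times_P\llu = (G\times\llu)/P $ one then checks $ \mu^{-1}(\orbit_x) = G[e,x] $ and $ \mathrm{Stab}_G([e,x]) = Z_P(x) = Z_G(x) = \mathrm{Stab}_G(x) $; hence $ \mu\colon G[e,x] \to \orbit_x $ is a $ G $-equivariant bijection of homogeneous spaces with coinciding isotropy, i.e.\ an isomorphism. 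This is (2); since $ G[e,x] = \mu^{-1}(\orbit_x) $ is open and dense in $ G\times_P\llu $ and $ \orbit_x $ is open and dense in $ \closure{\orbit_x} $, it also shows $ \mu $ is birational, so with the previous paragraph $ \mu\colon T^{\ast}\flagvariety_P \to \closure{\orbit_x} $ is a resolution of singularities, which is (1).

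For (3), properness of $ \mu $ and affineness of $ \closure{\orbit_x} $ make $ \mu_{\ast}\mathcal{O} $ coherent, so $ \C[G\times_P\llu] = \Gamma(\closure{\orbit_x}, \mu_{\ast}\mathcal{O}) $ is a finite, in particular integral, module over $ \C[\closure{\orbit_x}] $; and since $ G\times_P\llu $ is normal (being smooth), its ring of global functions is integrally closed in its fraction field, which by birationality of $ \mu $ is the function field $ \C(\orbit_x) $. Hence $ \C[G\times_P\llu] $ is exactly the integral closure of $ \C[\closure{\orbit_x}] $ in $ \C(\orbit_x) $, i.e.\ the coordinate ring $ \C[\closure{\orbit_x}^{\sim}] $ of the normalization. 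Finally, every nilpotent orbit is even-dimensional (carrying the Kostant--Kirillov form), so $ \closure{\orbit_x}\setminus\orbit_x $, a union of smaller nilpotent orbits, has codimension $ \ge 2 $ in $ \closure{\orbit_x} $; as the normalization map is finite its preimage in the normal variety $ \closure{\orbit_x}^{\sim} $ also has codimension $ \ge 2 $, across which regular functions extend. Therefore $ \C[\closure{\orbit_x}^{\sim}] = \C[\orbit_x] $, and when $ \closure{\orbit_x} $ is normal also $ = \C[\closure{\orbit_x}] $; all maps in sight are $ G $-equivariant, so this yields the asserted $ G $-isomorphism $ \C[G\times_P\llu] = \C[G\times\llu]^{P} \simeq \C[\orbit_x] $.

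The step I expect to be the main obstacle is this last identification in (3): passing from ``$ \C[G\times_P\llu] $ equals the normalization ring of $ \C[\closure{\orbit_x}] $'' to ``$ \simeq \C[\orbit_x] $''. Everything up to that point is essentially formal given properness, birationality, and normality of the source, whereas this identification genuinely uses the geometry of nilpotent orbits, namely that $ \orbit_x $ is a ``big'' open subset of its closure (complement of codimension $ \ge 2 $). A secondary subtlety is the transitivity of $ P $ on $ \orbit_x\cap\llu $, but as sketched above this reduces cleanly to the dimension count once $ Z_G(x) = Z_P(x) $ is assumed.
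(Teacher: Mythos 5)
Your argument is correct, and it is essentially the standard proof of this well-known proposition (properness of the moment map plus the dimension count from $Z_G(x)=Z_P(x)$ for (1)--(2), then normalization and the codimension-two property of orbit boundaries for (3)); the paper itself gives no proof here, only the citation to Jantzen, and your write-up matches the argument found there. The only point worth making explicit is that in the last step the normalization map is an isomorphism over the smooth locus $\orbit_x$, which is why $\C[\pi^{-1}(\orbit_x)]\simeq\C[\orbit_x]$.
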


If a reductive group $ K $ acts on a variety $ \mathfrak{X} $, we get a decomposition of the regular function ring 
as a $ K $-module, 
\begin{equation}
\label{eqn:notation.multiplicity}
\C[\mathfrak{X}] \simeq \textstyle\bigoplus\nolimits_{\tau \in \irreps{K}} m_{\tau}(\mathfrak{X}) \, \tau 
\qquad (\text{as a $ K $-module}), 
\end{equation}
where $ m_{\tau}(\mathfrak{X}) $ denotes the multiplicity.

\begin{theorem}[Borho-Kraft]
\label{theorem:Borho.Kraft}
Let $ h \in \lie{g} $ be a semisimple element 
and define the parabolic subgroup $ P $ and the Richardson orbit $ \orbit_x $ 
as above.  
Then the asymptotic cone of the semisimple orbit $ \orbit_h $ is equal to the Richardson orbit : 
$ \affAcone{\orbit_h} = \closure{\orbit_x} $.  
In addition, 
if $ Z_G(x) $ is connected and $ \closure{\orbit_x} $ is normal, we have 
\begin{equation*}
\C[ \orbit_h ] \simeq \Ind_L^G \trivial_L \simeq \C[ \orbit_x ] = \C[ \closure{\orbit_x} ] 
= \C[ \affAcone{\orbit_h} ] 
\qquad
(\text{as $ G $-modules})
\end{equation*}
i.e., $ m_{\tau}( \orbit_h ) = m_{\tau}( \orbit_x ) = m_{\tau}( \affAcone{\orbit_h} ) = \dim \tau^L $ 
$ (\forall \tau \in \irreps{G}) $.
\end{theorem}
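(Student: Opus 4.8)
The plan is to prove the set-theoretic identity $\affAcone{\orbit_h}=\closure{\orbit_x}$ first, by two inclusions exploiting the rescaling deformation $t\mapsto\orbit_{th}$, and then to upgrade it to the $G$-module statement by combining the $G$-module surjection of the Lemma in \S\ref{section:asymptotic.cone} with the moment-map description of $\C[\orbit_x]$. I begin with preliminary identifications. Since $L=Z_G(h)$ is exactly the adjoint stabiliser of $h$, we have $\orbit_h\cong G/L$, hence $\C[\orbit_h]\cong\C[G]^{L}\cong\Ind_L^G\trivial_L$ and, by the algebraic Peter--Weyl theorem, $m_\tau(\orbit_h)=\dim\tau^L$ for every $\tau\in\irreps{G}$; in particular $\dim\orbit_h=\dim G/L=2\dim\llu=\dim\orbit_x$, which is also $\dim\affAcone{\orbit_h}$ by the Proposition in \S\ref{section:asymptotic.cone}. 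Moreover, since $\ker(\ad h)=\lie{l}$ meets $\llu$ only in $0$, the operator $\ad h$ is bijective on $\llu$, so for $t\neq0$ the unipotent group $U$ has a dense orbit on the affine space $th+\llu$; being a unipotent orbit it is closed, whence $\Ad(U)(th)=th+\llu$.

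The inclusion $\closure{\orbit_x}\subseteq\affAcone{\orbit_h}$ is then immediate: for $u\in\llu$ and $t\neq0$ we have $th+u\in th+\llu=\Ad(U)(th)\subseteq\Ad(G)(th)=\orbit_{th}=t\,\orbit_h$, so $v_t:=h+t^{-1}u\in\orbit_h$ and $t\,v_t=th+u\to u$ as $t\to0$; by the description of $\affAcone{\orbit_h}$ as the set of limits $\lim_n c_n v_n$ with $c_n\to0$, $v_n\in\orbit_h$, this gives $u\in\affAcone{\orbit_h}$. Thus $\llu\subseteq\affAcone{\orbit_h}$, and since $\affAcone{\orbit_h}$ is closed and $\Ad(G)$-stable it contains $\Ad(G)\llu=\closure{\orbit_x}$.

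For the reverse inclusion I would realise the orbits $\orbit_{th}$ simultaneously. The $\C h$-coordinate $t=t(w)$ of a point $w=th+u\in\C h+\llu$ is $\Ad(P)$-invariant, so there is a well-defined morphism
\[ \Phi:\ G\times_P(\C h+\llu)\longrightarrow\C\times\lie{g},\qquad [g,\,th+u]\longmapsto\bigl(t,\,\Ad(g)(th+u)\bigr), \]
restricting on the locus $t=0$ to the moment map $\mu\colon G\times_P\llu\to\closure{\orbit_x}$. The map $\Phi$ is proper: it factors as the closed embedding $[g,w]\mapsto(gP,t(w),\Ad(g)w)$ of $G\times_P(\C h+\llu)$ into $G/P\times\C\times\lie{g}$ followed by the projection onto $\C\times\lie{g}$, which is proper because $G/P$ is complete. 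Hence $W:=\image\Phi$ is closed in $\C\times\lie{g}$; its fibre over $t=0$ is $\Ad(G)\llu=\closure{\orbit_x}$, while $\{t\}\times t\,\orbit_h=\{t\}\times\Ad(G)(th)\subseteq\{t\}\times\Ad(G)(th+\llu)\subseteq W$ for every $t\neq0$. Since $\affAcone{\orbit_h}$ is precisely the $t\to0$ limit of $t\,\orbit_h$, i.e.\ $\{0\}\times\affAcone{\orbit_h}=\closure{\bigcup_{t\neq0}\{t\}\times t\,\orbit_h}\cap(\{0\}\times\lie{g})$, we get $\affAcone{\orbit_h}\subseteq\closure{\orbit_x}$, and with the previous step $\affAcone{\orbit_h}=\closure{\orbit_x}$. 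The hard part here is the properness of $\Phi$ together with the correct computation of its fibre over $t=0$: this is the step that genuinely uses that $\llu$ is the nilradical of a parabolic with Levi $Z_G(h)$, and it is the parabolic counterpart of the fact that $\mu$ resolves $\closure{\orbit_x}$.

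Finally, for the module refinement the Lemma in \S\ref{section:asymptotic.cone} gives a surjection of $G$-modules $\C[\orbit_h]\surjection\C[\affAcone{\orbit_h}]$; by the identity just proved and the normality of $\closure{\orbit_x}$ its target is $\C[\closure{\orbit_x}]=\C[\orbit_x]$, which the moment-map Proposition above identifies $G$-equivariantly with $\C[G\times_P\llu]\cong\Ind_P^G\C[\llu]$. By Frobenius reciprocity $m_\tau(\orbit_x)=\dim\Hom_P(\tau,\C[\llu])$, and, $G$ being reductive, the surjection already forces $m_\tau(\orbit_x)\le m_\tau(\orbit_h)=\dim\tau^L$. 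To see that this surjection is an isomorphism it remains to prove the opposite inequality $\dim\tau^L\le m_\tau(\orbit_x)$ for every $\tau$; using that $Z_G(x)$ is connected one has $\orbit_x\cong G/Z_G(x)$, so this reduces to the identity $\dim\tau^L=\dim\tau^{Z_G(x)}$, equivalently $\Ind_{Z_G(x)}^G\trivial\cong\Ind_L^G\trivial$ (note this is \emph{not} a restriction isomorphism: $\C[\llu]$ and $\C[P/L]$ differ as $P$-modules, yet they induce the same $G$-module). I would derive this from a flatness argument in the sheet $\closure{\Ad(G)(\lie{z}(\lie{l})+\llu)}$ containing both $\orbit_h$ and $\closure{\orbit_x}$: flatness of its coordinate ring over $\C[\lie{z}(\lie{l})]$ makes $\C[\orbit_h]$ and $\C[\closure{\orbit_x}]$ share the same Hilbert function and hence forces the above $G$-surjection to be bijective in each degree. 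This no-collapsing assertion --- equivalently, that $\grade\idealof{\orbit_h}$ is already a radical ideal, and the place where both hypotheses (normality of $\closure{\orbit_x}$ and connectedness of $Z_G(x)$) enter --- is the delicate part of the last step; once it is in hand one obtains $\C[\orbit_h]\cong\Ind_L^G\trivial_L\cong\C[\orbit_x]=\C[\closure{\orbit_x}]=\C[\affAcone{\orbit_h}]$ and $m_\tau=\dim\tau^L$ throughout.
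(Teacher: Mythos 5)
The paper does not prove Theorem~\ref{theorem:Borho.Kraft}: it is quoted as a known result of Borho--Kraft \cite{Borho.Kraft.1979}, so there is no in-paper argument to compare against. Evaluating your proposal on its own terms, the set-theoretic half is correct and complete, and it is in the spirit of Borho--Kraft. The rescaling $th+\llu=\Ad(U)(th)\subseteq t\orbit_h$ gives $\closure{\orbit_x}\subseteq\affAcone{\orbit_h}$ cleanly, and the extended map $\Phi\colon G\times_P(\C h+\llu)\to\C\times\lie{g}$ is indeed proper with central fibre $\closure{\orbit_x}$; together with the standard Rees-family description of the asymptotic cone (fibre at $t=0$ of $\closure{\bigcup_{t\ne0}\{t\}\times t\,\orbit_h}$), this yields the reverse inclusion. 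Two small hygiene points: you should note that the $P$-invariance of the $h$-coordinate on $\C h+\llu$ uses $L=Z_G(h)$ (so $\Ad(L)$ fixes $h$ and $\Ad(U)-1$ lands in $\llu$), and that the Zariski and Euclidean closures of the constructible set $Z=\bigcup_{t\ne0}\{t\}\times t\,\orbit_h$ agree so that your limit description of the asymptotic cone is legitimate.

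The module refinement is where a genuine gap sits, and you partly acknowledge it. Your reduction is correct: the Lemma in \S\ref{section:asymptotic.cone} gives $m_\tau(\orbit_x)=m_\tau(\closure{\orbit_x})\le m_\tau(\orbit_h)=\dim\tau^L$ (using normality and $\codim\partial\Korbit_x\ge2$ for the first equality), and since $\C[\orbit_x]=\C[G]^{Z_G(x)}$ one needs $\dim\tau^{Z_G(x)}=\dim\tau^L$, which is exactly the nontrivial point. But the mechanism you propose — ``flatness of $\C[\closure{\Ad(G)(\lie{z}(\lie{l})+\llu)}]$ over $\C[\lie{z}(\lie{l})]$'' — is not correctly formulated: the sheet closure does not map to $\lie{z}(\lie{l})$ at all ($\Ad(G)$ destroys the $\lie{z}(\lie{l})$-coordinate). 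The flatness lives upstairs, on $\pi\colon G\times_P(\lie{z}(\lie{l})+\llu)\to\lie{z}(\lie{l})$, $[g,z+u]\mapsto z$, which is an affine $\llu$-bundle over $\lie{z}(\lie{l})\times G/P$ and hence faithfully flat; since $G$ is reductive one may take $\tau$-isotypic pieces, which are then finite flat (hence free) $\C[\lie{z}(\lie{l})]$-modules, so $\C[G\times_P(z+\llu)]$ has constant $G$-module structure in $z$. For $z=h$ regular this is $\C[\orbit_h]$ because $\mu\colon G\times_P(h+\llu)\to\orbit_h$ is an isomorphism, while for $z=0$ it is $\C[\orbit_x]$, and here one must invoke both hypotheses of the theorem: connectedness of $Z_G(x)$ forces $Z_G(x)=Z_P(x)$ (the two have the same dimension for a Richardson element, so equality of identity components is automatic and connectedness upgrades it to equality of groups), which makes $\mu\colon G\times_P\llu\to\closure{\orbit_x}$ birational, and then normality of $\closure{\orbit_x}$ gives $\C[G\times_P\llu]\simeq\C[\closure{\orbit_x}]=\C[\orbit_x]$. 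Once phrased this way, the ``no collapsing'' of the surjection $\C[\orbit_h]\surjection\C[\affAcone{\orbit_h}]$ follows, and the chain of $G$-module isomorphisms in the theorem drops out. So your outline is essentially right, but replace the flatness over the sheet closure by flatness of $\pi$ on the resolving bundle, and make explicit where each hypothesis enters.
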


Up to this point, we started with a semisimple element, but now we investigate in other ways.  
So take a nilpotent element $ x \in \llg $, and choose 
an \emph{$ \tds $ triple} $ \{ x , h, y \} $,  
where $ h $ is semisimple; $ x, y $ are nilpotent; and they satisfy the commutation relations 
\begin{equation*}
[ h, x ] = 2 x , \quad [ h, y ] = - 2 y , \quad [ x , y ] = h .
\end{equation*}
Thus $ \llg $ is a representation space of $ \tds = \cspan{ x, h, y } $.  
Therefore the eigenvalues of $ \ad h $ are integers and we get a $ \Z $-grading of $ \llg $ 
induced by the action of $ \ad h $.
\begin{equation}
\label{eqn:z.grading.by.adh}
\lie{g} = \textstyle\bigoplus\nolimits_{k \in \Z} \lie{g}_k \qquad
\lie{g}_k := \{ X \in \lie{g} \mid \ad (h) X = k X \} 
\end{equation}

\begin{definition}
If $ \lie{g}_1 = \{ 0 \} $, $ x $ is called an \emph{even} nilpotent element. 
Note that $ \lie{g}_1 = \{ 0 \} $ if and only if $ \lie{g}_k = \{ 0 \} \; (\forall k \text{ : odd} ) $.
\end{definition}

We put 
$ \lie{p} = \textstyle\bigoplus\nolimits_{k \geq 0} \lie{g}_k = \lie{l} \oplus \llu $, 
where 
$ \lie{l} = \lie{g}_0 $ and 
$ \llu = \textstyle\bigoplus\nolimits_{k > 0} \lie{g}_k $.  
Then $ \llp $ is a parabolic subalgebra 
and, if $ x $ is even nilpotent, then $ \orbit_x $ is a Richardson orbit for $ P = N_G(\llp) $.  
Even nilpotent elements have good properties (see \cite{Jantzen.2004} for example).

\begin{proposition}
Assume $ x $ is even nilpotent, 
then $ Z_G(x) = Z_P(x) $ holds.  
Hence the moment map $ \mu : T^{\ast} \flagvariety_P \to \closure{\orbit_x} $ is a resolution of singularities, 
and we have an isomorphism of regular function rings 
$ \C[ T^{\ast} \flagvariety_P ] \simeq \C[ \orbit_x ] $.  

Moreover, if $ \closure{\orbit_x} $ is normal, then 
$ \C[ \closure{\orbit_x} ] \simeq \C[ \orbit_x ] \simeq \C[ T^{\ast} \flagvariety_P ] $.
\end{proposition}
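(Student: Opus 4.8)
The strategy is to verify the two hypotheses of the moment-map proposition recalled above — that $x$ is a Richardson element for $P$, and that $Z_G(x) = Z_P(x)$ — and then simply invoke it. The Richardson property was already recorded above for even $x$, so the content is the centralizer identity, which I would first prove infinitesimally. The subspace $\lie{z}_{\lie{g}}(x) = \Ker(\ad x)$ is stable under $\ad h$, since $[z,x]=0$ implies $[[h,z],x] = [h,[z,x]] - [z,[h,x]] = -2[z,x] = 0$; hence it is graded, $\lie{z}_{\lie{g}}(x) = \bigoplus_k (\lie{z}_{\lie{g}}(x)\cap\lie{g}_k)$. Decomposing $\lie{g}$ into irreducible $\tds = \cspan{x,h,y}$-submodules, $\ad x$ acts on each summand as the raising operator, whose kernel is the highest-weight line; as the highest weight of a finite-dimensional $\tds$-module is a non-negative integer, this line lies in $\bigoplus_{k\geq 0}\lie{g}_k = \lie{p}$. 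Therefore $\lie{z}_{\lie{g}}(x)\subseteq\lie{p}$ (this step uses nothing about evenness).

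Next I would upgrade this to the group. Using the $\tds$-triple $\{x,h,y\}$ and the standard structure of centralizers of nilpotent elements, $Z_G(x)$ has a Levi decomposition $Z_G(x) = C\ltimes U_x$ in which $C = Z_G(\{x,h,y\})$ is reductive and $U_x = Z_G(x)\cap U$ is a connected unipotent group, where $U$ is the unipotent radical of $P$. Since $C\subseteq Z_G(h) = L\subseteq P$ and $U_x\subseteq U\subseteq P$, we obtain $Z_G(x)\subseteq P$, i.e. $Z_G(x) = Z_G(x)\cap P = Z_P(x)$. This is the one genuinely non-formal point of the argument: the infinitesimal inclusion alone only yields $Z_G(x)^{\circ}\subseteq P$, so one must control the component group, and the Levi decomposition of the nilpotent centralizer — together with the fact that its reductive part centralizes $h$, hence lies in $L$ — is exactly what does this.

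With both hypotheses verified, the moment-map proposition applies to the Richardson element $x$: the map $\mu\colon T^{\ast}\flagvariety_P \to \closure{\orbit_x}$ is a resolution of singularities, $\mu$ restricts to an isomorphism $G[e,x]\xrightarrow{\sim}\orbit_x$, and $\mu$ induces a $G$-equivariant isomorphism $\C[G\times_P\lie{u}] \simeq \C[\orbit_x]$; under the identification $T^{\ast}\flagvariety_P \simeq G\times_P\lie{u}$ this reads $\C[T^{\ast}\flagvariety_P]\simeq\C[\orbit_x]$. If in addition $\closure{\orbit_x}$ is normal, the same proposition gives $\C[\closure{\orbit_x}] = \C[\orbit_x]$, and combining the two isomorphisms yields $\C[\closure{\orbit_x}]\simeq\C[\orbit_x]\simeq\C[T^{\ast}\flagvariety_P]$, as claimed. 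Apart from the group-level inclusion $Z_G(x)\subseteq P$ discussed above, every step is either routine $\tds$-theory or direct appeal to the moment-map proposition.
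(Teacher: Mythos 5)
Your argument is correct; the paper itself gives no proof for this proposition, deferring to Jantzen's lecture notes (``Even nilpotent elements have good properties (see \cite{Jantzen.2004} for example)''), and what you have reconstructed is precisely the standard argument found there and in \cite{Collingwood.McGovern.1993}. The infinitesimal step $\lie{z}_{\lie{g}}(x)\subseteq\lie{p}$ via highest-weight lines of $\tds$-submodules is right, and you correctly identify the one non-formal point: passing to the group requires controlling the component group of $Z_G(x)$, which the Levi decomposition $Z_G(x) = Z_G(x,h,y)\cdot(Z_G(x)\cap U)$ does, since the reductive factor centralizes $h$ and hence sits inside $L = Z_G(h)$. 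You also correctly observe that the inclusion $Z_G(x)\subseteq P$ holds for \emph{any} nilpotent $x$ and its Jacobson--Morozov parabolic; evenness enters only to guarantee that $\orbit_x$ is Richardson for $P = N_G(\lie{p})$ (for odd $x$, the parabolic from the $\ad h$-grading is generally not one for which $\orbit_x$ is Richardson), so the hypothesis in the statement is there to make the earlier moment-map proposition applicable, not to make the centralizer equality true. The final deduction — resolution of singularities, $\C[T^{\ast}\flagvariety_P]\simeq\C[\orbit_x]$, and $\C[\closure{\orbit_x}]\simeq\C[\orbit_x]$ under normality — is a direct invocation of that proposition, as you say.
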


\begin{corollary}
Let $ \{ x , h, y \} $ be an $ \tds $ triple with $ x $ even nilpotent and 
assume that $ \closure{\orbit_x} $ is normal.  
Then the asymptotic cone of a semisimple element $ h $ is equal to the closure of the nilpotent orbit through $ x $.
\begin{equation*}
\affAcone{\orbit_h} = \closure{\orbit_x} 
\end{equation*}
Moreover, there is an isomorphism 
$ \C[ \affAcone{\orbit_h} ] \simeq \C[ T^{\ast} \flagvariety_P ] $.
\end{corollary}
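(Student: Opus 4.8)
The plan is to reduce the Corollary to the already-stated Borho--Kraft theorem and the preceding Proposition. First I would recall the setup: starting from an $\tds$ triple $\{x,h,y\}$ with $x$ even nilpotent, the element $h$ is semisimple, so the Borho--Kraft Theorem~\ref{theorem:Borho.Kraft} applies to the closed orbit $\orbit_h = \Ad(G)h$. The key observation is that the parabolic subgroup $P$ attached to $h$ in \S\ref{section:Richardson.orbit} (namely $P = Z_G(h)\cdot U$, with Levi $\lie{l} = \lie{g}_0 = Z_{\llg}(h)$) coincides with the parabolic $P = N_G(\llp)$ built from the $\Z$-grading $\llg = \bigoplus_k \llg_k$ in \eqref{eqn:z.grading.by.adh}, precisely because evenness forces $\llg_k = 0$ for odd $k$, so that $\llg_0$ is exactly the centralizer $Z_{\llg}(h)$ and $\llp = \bigoplus_{k\ge 0}\llg_k$ is the parabolic with that Levi. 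Hence the Richardson orbit for $P$ associated to the semisimple element $h$ in the sense of Theorem~\ref{theorem:Borho.Kraft} is the same $\orbit_x$ that appears in the preceding Proposition, since $x \in \llg_2 \subset \llu$ is a Richardson element for $P$ when $x$ is even.

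With this identification in hand, the proof is short. Theorem~\ref{theorem:Borho.Kraft} directly gives $\affAcone{\orbit_h} = \closure{\orbit_x}$, which is the first assertion; note that the normality hypothesis on $\closure{\orbit_x}$ is not needed for this equality, only for the function-ring statement. For the second assertion, I would invoke the preceding Proposition: since $x$ is even nilpotent, $Z_G(x) = Z_P(x)$, so the moment map $\mu: T^{\ast}\flagvariety_P \to \closure{\orbit_x}$ is a resolution of singularities and $\C[T^{\ast}\flagvariety_P] \simeq \C[\orbit_x]$ as $G$-modules; adding the normality hypothesis on $\closure{\orbit_x}$ gives $\C[\closure{\orbit_x}] \simeq \C[\orbit_x] \simeq \C[T^{\ast}\flagvariety_P]$. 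Combining with $\affAcone{\orbit_h} = \closure{\orbit_x}$ from Theorem~\ref{theorem:Borho.Kraft} yields $\C[\affAcone{\orbit_h}] \simeq \C[T^{\ast}\flagvariety_P]$.

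The only point that needs a little care — and what I expect to be the main (minor) obstacle — is verifying cleanly that the two a~priori different constructions of $P$ genuinely agree, i.e.\ that the parabolic attached to the semisimple element $h$ via its centralizer is the same as the one attached to the $\tds$ triple via the grading. This hinges on the elementary but essential fact that for an $\tds$ triple the grading element $h$ satisfies $Z_{\llg}(h) = \llg_0$, and that evenness is exactly the condition that makes $\llp = \bigoplus_{k\ge 0}\llg_k$ a genuine parabolic with this prescribed Levi (rather than needing an auxiliary choice); this is already recorded in the text just before the Proposition. Once that bookkeeping is settled, everything else is a direct citation of Theorem~\ref{theorem:Borho.Kraft} and the Proposition, with no further computation required.
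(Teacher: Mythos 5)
Your proposal is correct and it is exactly the argument the paper leaves implicit: the corollary is a concatenation of Theorem~\ref{theorem:Borho.Kraft} (giving $\affAcone{\orbit_h} = \closure{\orbit_x}$) with the preceding Proposition (giving $\C[\closure{\orbit_x}] \simeq \C[T^{\ast}\flagvariety_P]$ under the evenness and normality hypotheses), once one notes that evenness forces $Z_{\llg}(h) = \llg_0$ so the parabolic $\llp = \bigoplus_{k\ge 0}\llg_k$ from the grading is the same $P$ with Levi $Z_G(h)$ used in Borho--Kraft. Your observation that normality is needed only for the function-ring isomorphism and not for the asymptotic-cone equality is also accurate.
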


\section{Richardson orbit for symmetric pair}
\label{section:Richardson.orbit.for.symmetric.pair}

Let $ \GR $ be a reductive Lie group, which is a real form of 
a connected complex algebraic group $ G $.  
We fix a Cartan involution $ \theta $.  
Then the fixed point subgroup of $ \theta $ is a maximal compact subgroup $ \KR = \GR^{\theta} $.  
We extend $ \theta $ to $ G $ holomorphically, and 
put $ K = G^{\theta} $, which is a complexification of $ \KR $.  
We mainly consider a symmetric pair $ ( G , K ) $ in the following.  

Let $ \lie{g} $ be the Lie algebra of $ G $, and 
$ \lie{g} = \lie{k} \oplus \lie{s} $ a (complexified) Cartan decomposition, 
where $ \lie{k} $ is the Lie algebra of $ K $ and $ \lie{s} $ is the $ (-1) $-eigenspace of the differential of $ \theta $.  

Take a $ \theta $-stable parabolic subalgebra $ \lie{p} $ of $ \lie{g} $.  
We denote by $ P $ the corresponding parabolic subgroup of $ G $, 
and put $ \flagvariety_P = G / P $, the partial flag variety.  
Then $ \flagvariety_P $ can be considered as the totality of 
the parabolic subalgebras of $ \lie{g} $ which is conjugate to $ \lie{p} $ by the adjoint action of $ G $.  
The $ K $-orbit of the $ \theta $-stable parabolic $ \lie{p} $ is a closed orbit in $ \flagvariety_P $.   
Conversely, if there is a $ \theta $-stable parabolic, 
then any closed $ K $-orbit in $ \flagvariety_P $ arises as a $ K $-conjugacy class of $ \theta $-stable parabolic subalgebras.  

Let $ \forbit $ denote a closed $ K $-orbit in $ \flagvariety_P $ generated by $ \lie{p} $.  
Then the conormal bundle $ T_{\forbit}^{\ast} \flagvariety_P $ over $ \forbit $ can be described as follows.  

Since $ \lie{p} $ is $ \theta $-stable, 
$ \lie{q} = \lie{p} \cap \lie{k} $ is a parabolic subalgebra in $ \lie{k} $.  
Let $ Q $ be the corresponding parabolic subgroup of $ K $.  
If $ \lie{p} = \lie{l} \oplus \lie{u} $ is a $ \theta $-stable Levi decomposition, 
$ \lie{q} = \lie{l}(\lie{k}) \oplus \lie{u}(\lie{k}) $ with 
$ \lie{l}(\lie{k}) = \lie{l} \cap \lie{k} $ and $ \lie{u}(\lie{k}) = \lie{u} \cap \lie{k} $ gives 
a Levi decomposition of $ \lie{q} $.  
Also we put $ \lie{u}(\lie{s}) = \lie{u} \cap \lie{s} $.
Then $ \lie{u}(\lie{s}) $ is $ Q $-stable, and we have
\begin{equation*}
T_{\forbit}^{\ast} \flagvariety_P \simeq K \times_Q \lie{u}(\lie{s}) = (K \times \lie{u}(\lie{s})) / Q
\end{equation*}
where the action of $ Q $ on $ K \times \lie{u}(\lie{s}) $ is given by $ q ( k, x) = (k q^{-1} , \Ad (q) x) $ for $ q \in Q , k \in K , x \in \lie{u}(\lie{s}) $.  
We denote the class of $ ( k, x ) \in K \times \lie{u}(\lie{s}) $ in $ K \times_Q \lie{u}(\lie{s}) $ by $ [ k, x ] $.  
Then a map 
\begin{equation*}
\mu : T_{\forbit}^{\ast} \flagvariety_P \simeq K \times_Q \lie{u}(\lie{s}) \to \lie{s} , \qquad  \mu( [k, x] ) = \Ad (k) x 
\end{equation*}
is well-defined, and called the \emph{moment map}.  
For any $ K $-orbit $ \forbit $ in $ \flagvariety_P $, 
the moment map image of the conormal bundle $ T_{\forbit}^{\ast} \flagvariety_P $ is the closure of 
a single nilpotent $ K $-orbit $ \Korbit $ in $ \lie{s} $.  
The following definition is due to P.~Trapa \cite{Trapa.2005} 
(see also \cite{Trapa.2007}).

\begin{definition}
Let $ \lie{p} $ be a $ \theta $-stable parabolic subalgebra and 
$ \forbit $ a closed $ K $-orbit in $ \flagvariety_P $ through $ \lie{p} $.  
If a nilpotent $ K $-orbit $ \Korbit \subset \lie{s} $ is dense in 
the moment map image of $ T_{\forbit}^{\ast} \flagvariety_P $, 
it is called a \emph{Richardson orbit for the symmetric pair} $ G/K $ associated to $ \lie{p} $.
\end{definition}

The following is a representation theoretic characterization of Richardson orbits.

\begin{theorem}
A nilpotent $ K $-orbit $ \Korbit \subset \lie{s} $ is a Richardson orbit for the symmetric pair 
if and only if its closure is the associated variety of a derived functor module 
$ A_{\lie{p}} $ with the trivial infinitesimal character for a certain $ \theta $-stable parabolic subalgebra $ \lie{p} $.
\end{theorem}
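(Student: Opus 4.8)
The plan is to prove the characterization by relating the associated variety of the derived functor module $A_{\lie{p}}$ to the moment map image of the conormal bundle $T_{\forbit}^{\ast}\flagvariety_P$, and then invoking the definition of Richardson orbit for the symmetric pair. The key structural fact is that the associated variety of $A_{\lie{p}}$ (for a $\theta$-stable parabolic $\lie{p}$, trivial infinitesimal character) is computed by a standard Beilinson--Bernstein / $\mathcal{D}$-module localization argument: $A_{\lie{p}}$ is, up to twist, the cohomology of a direct image of a $\mathcal{D}$-module supported on the closed $K$-orbit $\forbit \subset \flagvariety_P$, and its characteristic variety is therefore contained in the conormal variety $T_{\forbit}^{\ast}\flagvariety_P$. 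Pushing forward under the moment map $\mu$ (which on the symmetric-pair side sends $T_{\forbit}^{\ast}\flagvariety_P \simeq K\times_Q\lie{u}(\lie{s})$ to $\lie{s}$ by $[k,x]\mapsto \Ad(k)x$), one gets that $\AssVar(A_{\lie{p}}) \subseteq \closure{\mu(T_{\forbit}^{\ast}\flagvariety_P)}$, which by the discussion preceding the definition is exactly $\closure{\Korbit}$ where $\Korbit$ is the Richardson orbit associated to $\lie{p}$.

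The first main step, then, is to establish the reverse inclusion, i.e. that $\AssVar(A_{\lie{p}})$ is not a proper closed subvariety but equals $\closure{\Korbit}$ on the nose. Here I would use a dimension/degree count: the Gelfand--Kirillov dimension of $A_{\lie{p}}$ equals $\dim\lie{u}(\lie{s}) = \dim\forbit + \operatorname{rk}$ of the bundle fiber $ = \dim\Korbit$ (the last equality being precisely the statement that $\mu$ restricted to the conormal bundle is generically finite onto its image, which is part of the Richardson-orbit package for $G/K$, analogous to $Z_G(x)=Z_P(x)$ in the group case). Since $\AssVar(A_{\lie{p}})$ is a closed $K$-stable subvariety of the nilpotent cone $\nullconeof{\lie{s}}$ of the same dimension as $\Korbit$ and contained in $\closure{\mu(T_{\forbit}^{\ast}\flagvariety_P)} = \closure{\Korbit}$, it must contain $\Korbit$ as a dense subset, hence equal $\closure{\Korbit}$. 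For the converse direction of the ``if and only if'': given any Richardson orbit $\Korbit$ for $G/K$ associated to some $\theta$-stable $\lie{p}$, the same computation shows $\closure{\Korbit} = \AssVar(A_{\lie{p}})$, and $A_{\lie{p}}$ with trivial infinitesimal character is nonzero (its underlying $(\lie{g},K)$-module has the expected $K$-types by the Blattner-type formula), so it is a valid witness.

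The main obstacle I anticipate is the precise bookkeeping in the localization step: one must be careful that ``derived functor module $A_{\lie{p}}$ with trivial infinitesimal character'' is nonzero and irreducible (or at least that its associated variety is irreducible and equals a single orbit closure), which requires the good-range or weakly-fair-range hypotheses on the inducing character; with the trivial infinitesimal character this is in the weakly fair range and the needed nonvanishing and $\AssVar$-irreducibility results are available (Vogan, and in the $\theta$-stable setting the work of Trapa cited in the excerpt). A second, more technical point is identifying the characteristic variety of the localized $\mathcal{D}$-module with the \emph{full} conormal bundle rather than a union including smaller boundary pieces; this is handled by the fact that the standard $\mathcal{D}$-module attached to the (closed, hence affine-less-obstructed) orbit $\forbit$ is clean, so its characteristic variety is exactly $\closure{T_{\forbit}^{\ast}\flagvariety_P}$ with multiplicity one, and functoriality of characteristic varieties under the projection $\mu$ then transfers this cleanly to $\lie{s}$. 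Once these localization inputs are in place, the remaining arguments are the dimension count and the identification of $\closure{\mu(T_{\forbit}^{\ast}\flagvariety_P)}$ with $\closure{\Korbit}$, both of which are essentially the definitions recalled above.
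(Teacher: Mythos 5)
The paper states this theorem without proof; it is presented as a known representation-theoretic characterization, following Trapa \cite{Trapa.2005, Trapa.2007} and Vogan's associated-variety theory \cite{Vogan.1991}, so there is no in-paper proof to compare against. Judged on its own terms, your overall strategy is the right one: both directions ultimately reduce to Vogan's theorem that, when the cohomologically induced module $A_{\lie{p}}(\lambda)$ is nonzero in the weakly fair range, its associated variety is $\closure{K\cdot(\lie{u}\cap\lie{s})} = \closure{\mu(T^{\ast}_{\forbit}\flagvariety_P)}$, which is by definition the closure of the Richardson orbit attached to $\lie{p}$.

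However, the dimension-count step contains a genuine error. You write $\dim\lie{u}(\lie{s}) = \dim\forbit + \mathrm{rk}(\text{bundle fiber})$ and assert this equals $\dim\Korbit$ because $\mu$ is ``generically finite onto its image.'' Both of these are false in general. The conormal bundle $T^{\ast}_{\forbit}\flagvariety_P$ is Lagrangian, so $\dim T^{\ast}_{\forbit}\flagvariety_P = \dim\flagvariety_P = \dim\lie{u}$, and it decomposes as $\dim\forbit + \dim\lie{u}(\lie{s})$; your displayed equation confuses the total space of the bundle with the rank of its fiber. Moreover $\mu$ restricted to a conormal bundle of a closed $K$-orbit is \emph{not} generically finite in general: for instance, for $G/K$ associated to $U(2,1)$ and $\lie{p}$ a $\theta$-stable Borel, $\dim T^{\ast}_{\forbit}\flagvariety_P = 3$ while $\dim\Korbit = \dim(\lie{u}\cap\lie{s}) = 2$, so the generic fiber is one-dimensional. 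The correct statement (which is the content of Vogan's theorem, not something derivable from a naive dimension count) is that the GK dimension of $A_{\lie{p}}$ equals $\dim(\lie{u}\cap\lie{s})$, and this equals $\dim\Korbit$ for the dense orbit $\Korbit$ in the moment-map image. So the argument must invoke Vogan's theorem as an input rather than rederive it from generic finiteness. Your concern about nonvanishing of $A_{\lie{p}}$ at trivial infinitesimal character (the weakly fair boundary case) is a legitimate worry and needs a citation; the paper is silent on it, which is consistent with the theorem being cited rather than proved.
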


\section{Asymptotic cone for symmetric pair}
\label{section:main.theorem}

Let $ x \in \lie{s} $ be a nilpotent element.  
Then we can choose $ y \in \lie{s} $ and $ h \in \lie{k} $ such that 
$ \{ x, h, y \} $ forms a normal $ \tds $ triple, where $ x, y $ are nilpotent, and $ h $ semisimple 
(see \cite[\S 9.4]{Collingwood.McGovern.1993} for example).  
In addition, after suitable conjugation by $ K $, 
we can assume $ \conjugate{x} = y $, where $ \conjugate{x} $ denotes the complex conjugation 
with respect to $ \gR $.  We call a normal $ \tds $ triple with this property a \emph{KS triple}.  
Then 
\begin{equation*}
a = \iunit (x - y) \in \sR 
\end{equation*}
is a semisimple element in $ \sR $.   
Also we put 
\begin{equation*}
e = \frac{1}{2} ( x + y + \iunit h ) , \qquad
f = \frac{1}{2} ( x + y - \iunit h ) = - \theta(e) .
\end{equation*}
Then $ e $ and $ f $ are nilpotent elements belonging to the real form $ \gR $, 
and $ \{ e , a , f \} $ is a standard $ \tds $ triple in $ \gR $.  
We call it a \emph{Cayley triple}.  
Every standard $ \tds $ triple is $ \GR $-conjugate to a Cayley triple.

The following theorem is well known.

\begin{theorem}[Sekiguchi \cite{Sekiguchi.1987}, Vergne \cite{Vergne.1995}] 
Nilpotent orbits $ \Korbit_x = \Ad (K) \, x $ and $ \GRorbit_e = \Ad (\GR) \, e $ are $ \KR $-equivariantly diffeomorphic, 
and moreover they generate the same nilpotent $ G $-orbit: 
$ \Ad (G) x = \Ad (G) e $.  
This correspondence gives a bijection between the set of non-zero nilpotent $ K $-orbits in $ \lie{s} $ and 
that of non-zero nilpotent $ \GR $-orbits in $ \gR $.  
\end{theorem}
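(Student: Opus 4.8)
The plan is to translate the statement into the combinatorics of $\tds$ triples, where most of it becomes bookkeeping, and then to extract the diffeomorphism by a Cartan-decomposition (deformation) argument. The relation $\Ad(G)x = \Ad(G)e$ is the easy half and is almost formal. Inverting the displayed formulas one gets $x = \tfrac12(e + f - \iunit a)$, $y = \tfrac12(e + f + \iunit a)$, $h = \iunit(f - e)$, so the two triples span one and the same three-dimensional simple subalgebra $\lie r := \cspan{x, h, y} = \cspan{e, a, f}$ of $\lie g$. The \emph{Cayley element} $c = \exp\!\bigl(\tfrac{\pi\iunit}{4}(x + y)\bigr) = \exp\!\bigl(\tfrac{\pi\iunit}{4}(e + f)\bigr)$ lies in $G$ because $x + y \in \lie s$ is nilpotent, and a $2 \times 2$ computation inside this copy of $\tds$ gives $\Ad(c)\,e = x$, $\Ad(c)\,f = y$, $\Ad(c)\,a = h$; hence $\Ad(G)e = \Ad(G)x$. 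Since $\lie r$ is shared by the two triples, so are their centralizers: I write $Z := Z_{\KR}(\{x,h,y\}) = Z_{\KR}(\lie r) = Z_{\KR}(\{e,a,f\})$, a compact group which will serve as the common structure group below.

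For the bijection I would use the Jacobson--Morozov theorem in the symmetric setting (Kostant--Rallis; see also \cite{Collingwood.McGovern.1993,Kostant.Rallis.1971}): every nonzero nilpotent $x \in \lie s$ extends to a normal $\tds$ triple with $h \in \lie k$ and $x, y \in \lie s$, and after a $K$-conjugation we may assume $\conjugate{x} = y$, i.e.\ a KS triple; the Cayley element then yields a Cayley triple, hence a nonzero nilpotent $e \in \gR$. Conversely, every nonzero nilpotent of $\gR$ occurs in a Cayley triple. That $\Korbit_x \mapsto \GRorbit_e$ is well defined and injective follows from the uniqueness clause of Kostant's conjugacy theorem on both sides — normal $\tds$ triples in $\lie g$ with $K$-conjugate nilpositive elements are $K$-conjugate, and $\tds$ triples in $\gR$ with $\GR$-conjugate nilpositive elements are $\GR$-conjugate — carried across the Cayley dictionary between KS triples and Cayley triples. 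Both nilpotent cones being finite unions of orbits, this is a bijection of finite sets.

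The substantive point is the $\KR$-equivariant diffeomorphism $\Korbit_x \cong \GRorbit_e$. On the $K$-side, $\Korbit_x \cong K/Z_K(x)$, the Jacobson--Morozov structure of the centralizer gives $Z_K(x) = Z_K(\lie r) \ltimes U_K$ with $U_K$ unipotent, and the Cartan decomposition $K = \KR\exp(\iunit\lier{k})$ exhibits $K/Z_K(\lie r)$ $\KR$-equivariantly as a vector bundle over $\KR/Z$; pulling back along the contractible $U_K$-fibration gives $\Korbit_x \cong \KR \times_Z V_{\lie s}$ for a real $Z$-module $V_{\lie s}$. On the $\GR$-side, the parallel analysis of $Z_{\GR}(e) = Z_{\GR}(\lie r) \ltimes U_\R$ together with $\GR = \KR\exp(\sR)$ gives $\GRorbit_e \cong \KR \times_Z V_\R$. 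A dimension count forces $\dim V_{\lie s} = \dim V_\R$: here $\dim_\R \Korbit_x = 2\dim_\C\Ad(K)x = \dim_\C\Ad(G)x$ because $\Ad(K)x$ is Lagrangian in $\Ad(G)x$, while $\dim_\R\GRorbit_e = \dim_\C\Ad(G)e = \dim_\C\Ad(G)x$ because $Z_G(e)$ is defined over $\R$. It then remains to identify $V_{\lie s}$ and $V_\R$ as $Z$-modules: both are assembled from the strictly positive part of the $\ad(h)$-grading of $\lie g$ — its intersection with $\iunit\lier{k}$ together with $\Lie(U_K)$ in one case, its intersection with $\sR$ together with $\Lie(U_\R)$ in the other — and the Cayley element should exchange these pieces, which would finish the argument.

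The hard part will be precisely this last identification: upgrading the evident $\KR$-homotopy equivalence (both orbits retract onto $\KR/Z$) to an honest $\KR$-equivariant diffeomorphism of the two associated bundles. Because $c$ does not normalize $\KR$, one has to check with care that it induces a $Z$-equivariant isomorphism of the relevant graded subspaces, and the convergence of the Cartan-decomposition straightening on these non-closed nilpotent orbits also needs to be justified (the norm function is not proper on a single nilpotent orbit). The route I would ultimately prefer, which is Vergne's and bypasses the bundle comparison, is to equip the complex orbit $\Ad(G)x$ with Kronheimer's hyperk\"ahler metric, realize $\Korbit_x$ and $\GRorbit_e$ as fixed-point sets of two of the associated anti-holomorphic involutions, and produce the diffeomorphism from the hyperk\"ahler rotation in the $SU(2)$ of complex structures — the analytic cost then being the existence and smoothness of that metric rather than a flow-convergence estimate.
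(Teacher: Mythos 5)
The paper itself does not prove this statement: it is quoted as a known external theorem (the Kostant--Sekiguchi correspondence, with the $\KR$-equivariant diffeomorphism due to Vergne) and backed only by the citations to Sekiguchi, Vergne, Collingwood--McGovern and Barbasch--Sepanski. There is therefore no internal argument of the paper to compare against, and I can only judge your sketch on its own merits. The algebraic half is correct: the Cayley element $c = \exp\bigl(\tfrac{\pi\iunit}{4}(x+y)\bigr)$ does satisfy $\Ad(c)e = x$, $\Ad(c)a = h$, $\Ad(c)f = y$ by a direct $2\times 2$ computation inside the common $\tds = \cspan{x,h,y} = \cspan{e,a,f}$, giving $\Ad(G)x = \Ad(G)e$; and the bijection follows, as you argue, from the existence of KS and Cayley triples combined with the Kostant and Kostant--Rallis conjugacy theorems for $\tds$ triples.

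The $\KR$-equivariant diffeomorphism, however, is not established by your third paragraph, and that is a genuine gap. Mostow's fibration does exhibit $K/Z_K(\lie{r})$ as a $\KR$-equivariant vector bundle over $\KR/Z$, but the step ``pulling back along the contractible $U_K$-fibration gives $\Korbit_x \cong \KR \times_Z V_{\lie{s}}$'' does not follow. The map at hand runs $K/Z_K(\lie{r}) \to K/Z_K(x) = \Korbit_x$ with fiber $Z_K(x)/Z_K(\lie{r}) \cong U_K$; contractibility of that fiber gives you a (non-equivariant) homotopy equivalence, and even a section, but nothing forces the Mostow bundle structure to descend along it to a $\KR$-equivariant vector bundle over $\KR/Z$, and $U_K$ is not normal in $K$ so there is no quotient to hide behind. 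Supplying such a $\KR$-equivariant normal form for a (non-closed) nilpotent orbit is essentially the substance of the theorem, and a dimension count cannot substitute for identifying the $Z$-modules, as you yourself note. You correctly diagnose this and name Vergne's route (Kronheimer's hyperk\"ahler metric on $\Ad(G)x$, realizing $\Korbit_x$ and $\GRorbit_e$ as fixed-point sets of two anti-holomorphic involutions, and using hyperk\"ahler rotation), but you do not carry it out. As written, the proposal proves the ``moreover'' clause and the bijection, but leaves the $\KR$-equivariant diffeomorphism --- the substantive assertion --- unproved.
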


See \cite[Theorem 9.5.1 \& Remark 9.5.2]{Collingwood.McGovern.1993} and \cite{Barbasch.Sepanski.1998} for further properties.  

\medskip

Let us denote $ \Gorbit_x = \Ad (G) x $.  
Then the intersection $ \Gorbit_x \cap \lie{s} $ breaks up into several nilpotent $ K $-orbits 
$ \textstyle\bigcup\nolimits_{i = 0}^{\ell} \Korbit_{x_i} $ where $ x = x_0 $.  
It is well known that each $ \Korbit_{x_i} $ is a Lagrangian subvariety 
for the canonical symplectic structure on 
$ \Gorbit_x $, and 
consequently they all have the same dimension $ \frac{1}{2} \dim \Gorbit_x $ 
(see \cite[Corollary 5.20]{Vogan.1991} for example).  
We also consider a complex semisimple orbit $ \Korbit_a := \Ad (K)\, a \subset \lie{s} $, which is closed.   
Note that $ a $ and $ h $ generate the same $ G $-orbit, $ \Gorbit_a = \Ad (G) \, a = \Gorbit_h $.

Let us consider $ \ad h $-eigenspace decomposition 
\begin{math}
\lie{g} = \textstyle\bigoplus\nolimits_{k \in \Z} \lie{g}_k 
\end{math}
as in Equation \eqref{eqn:z.grading.by.adh}.
We put 
\begin{equation}
\label{eqn:parabolic.from.z.grading}
\lie{p} = \textstyle\bigoplus\nolimits_{k \geq 0} \lie{g}_k = \lie{l} \oplus \lie{u} , 
\qquad 
\text{ where } \quad
\lie{l} = \lie{g}_0 , \; 
\lie{u} = \textstyle\bigoplus\nolimits_{k > 0} \lie{g}_k .
\end{equation}
Then $ \lie{p} $ is a $ \theta $-stable parabolic subalgebra, and 
$ \lie{q} = \lie{p} \cap \lie{k} $ is a parabolic in $ \lie{k} $.  
We denote $ P $ and $ Q $ the parabolic subgroups of $ G $ and $ K $ respectively 
corresponding to $ \lie{p} $ and $ \lie{q} $.  
We follow the notation in \S \ref{section:Richardson.orbit.for.symmetric.pair}.

\begin{theorem}
\label{theorem:asymptotic.cone.of.semisimple.orbit}
Assume that $ x \in \lie{s} $ is an even nilpotent element, and 
let $ \{ x , h , y \} $ be a normal $ \tds $ triple.  
After conjugation by $ K $, we can assume $ \{ x, h, y \} $ is a KS triple.  
Put $ a = \iunit ( x - y ) \in \sR $.   
Then the asymptotic cone of $ \Korbit_a $ is equal to 
\begin{equation}
\label{eqn:asymptotic.cone.of.semisimple.orbit}
\affAcone{\Korbit_a} = \closure{\Gorbit_x \cap \lie{s}} 
= \textstyle\bigcup\nolimits_{i = 0}^{\ell} \closure{\Korbit_{x_i}} \; , 
\end{equation}
where $ \{ x = x_0 , x_1 , \dots x_{\ell} \} $ is a complete set of representatives of the $ K $-orbits in 
$ \Gorbit_x \cap \lie{s} $, 
and $ \{ \Korbit_{x_i} \; ( 0 \leq i \leq \ell ) \} $ are Richardson orbits for a symmetric pair $ G/K $.
\end{theorem}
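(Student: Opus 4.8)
The plan is to establish the two inclusions in \eqref{eqn:asymptotic.cone.of.semisimple.orbit} separately, using the formal properties of asymptotic cones from \S\ref{section:asymptotic.cone} together with the Borho--Kraft theorem (Theorem~\ref{theorem:Borho.Kraft}) applied to the ambient group $ G $. The identification of the $ \Korbit_{x_i} $ as Richardson orbits for $ G/K $ will then be a short afterword.

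The first ingredient is a dimension count: $ \dim \Korbit_a = \tfrac12 \dim \Gorbit_x $. Since $ a \in \lie{s} $, the tangent space $ \ad(\lie{k})a $ at $ a $ is isotropic for the Kirillov--Kostant--Souriau form on the closed orbit $ \Gorbit_a = \Ad(G)a $ (because $ [\lie{k},\lie{k}] \subseteq \lie{k} $ is Killing-orthogonal to $ a \in \lie{s} $), so $ \dim \Korbit_a \le \tfrac12 \dim \Gorbit_a $; conversely, writing $ \ad(a) $ in block form for $ \lie{g} = \lie{k}\oplus\lie{s} $ and using that $ \ad(a)|_{\lie{k}}\colon\lie{k}\to\lie{s} $ and $ \ad(a)|_{\lie{s}}\colon\lie{s}\to\lie{k} $ are, up to sign, mutually transpose for the Killing form, one gets $ \rank \ad(a) = 2\,\rank \ad(a)|_{\lie{k}} $, hence $ \dim \Korbit_a \ge \tfrac12 \dim \Gorbit_a $. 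As $ \Gorbit_a = \Gorbit_h $ and $ x $ is even, $ \dim \Gorbit_a = \dim\lie{g} - \dim\lie{l} = \dim \Gorbit_x $. The same isotropy argument shows that any nilpotent $ K $-orbit in $ \lie{s} $ of dimension $ \tfrac12\dim\Gorbit_x $ lying in $ \closure{\Gorbit_x} $ must coincide with one of the $ \Korbit_{x_i} $. For the upper bound, note $ \Korbit_a \subseteq \Gorbit_h $ with both closed in $ \lie{g} $, so by monotonicity of the asymptotic cone and Theorem~\ref{theorem:Borho.Kraft} (for even $ x $, $ \Gorbit_x $ is the Richardson orbit of the parabolic $ P $ attached to $ h $) we get $ \affAcone{\Korbit_a} \subseteq \affAcone{\Gorbit_h} = \closure{\Gorbit_x} $; also $ \affAcone{\Korbit_a} \subseteq \lie{s} $, so $ \affAcone{\Korbit_a} \subseteq \closure{\Gorbit_x}\cap\lie{s} $. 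By the Proposition in \S\ref{section:asymptotic.cone} this set is equidimensional of dimension $ \dim\Korbit_a = \tfrac12\dim\Gorbit_x $ and, lying in the nilpotent cone of $ \lie{s} $, is a finite union of $ K $-orbit closures; by the previous remark each of its components is some $ \closure{\Korbit_{x_i}} $, so $ \affAcone{\Korbit_a} = \bigcup_{i\in S}\closure{\Korbit_{x_i}} $ for a nonempty $ S\subseteq\{0,\dots,\ell\} $.

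It remains to show $ S = \{0,\dots,\ell\} $. For each $ i $ pick a KS triple $ \{x_i,h_i,y_i\} $ through $ x_i $ and put $ a_i = \iunit(x_i - y_i) \in \lier{s} $. By Kostant's theorem all $ \tds $-triples through the single orbit $ \Gorbit_x $ are $ G $-conjugate, so the semisimple elements $ a_i $ are mutually $ G $-conjugate; since they all lie in $ \lie{s} $ and the restriction $ \C[\lie{g}]^G \to \C[\lie{s}]^K $ is surjective (Kostant--Rallis), $ G $-conjugate semisimple elements of $ \lie{s} $ are already $ K $-conjugate, whence $ \Korbit_{a_i} = \Korbit_a $ for every $ i $. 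Finally, because $ x_i, y_i $ are $ \ad h_i $-eigenvectors with eigenvalues $ \pm 2 $ and $ \exp(\R h_i) \subseteq K $, a rescaled one-parameter family $ \tau_n\,\Ad(\exp(s_n h_i))a_i $ with $ s_n\to\infty $ and $ \tau_n\to 0 $ converges to $ x_i $, so $ x_i \in \affAcone{\Korbit_{a_i}} = \affAcone{\Korbit_a} $ and hence $ \closure{\Korbit_{x_i}} \subseteq \affAcone{\Korbit_a} $. This gives $ \affAcone{\Korbit_a} = \bigcup_{i=0}^{\ell}\closure{\Korbit_{x_i}} = \closure{\Gorbit_x\cap\lie{s}} $. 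The claim that each $ \Korbit_{x_i} $ is a Richardson orbit for $ G/K $ then follows by applying the moment-map description of \S\ref{section:Richardson.orbit.for.symmetric.pair} to the $ \theta $-stable parabolic $ \lie{p}_i = \bigoplus_{k\ge 0}\lie{g}_k(\ad h_i) $: since $ x_i $ lies in $ \lie{u}_i\cap\lie{s} $ and $ \dim\Korbit_{x_i} = \dim\lie{u}_i = \dim T^{\ast}_{\forbit_i}\flagvariety_{P} $, the orbit $ \Korbit_{x_i} $ is dense in the moment-map image.

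The step I expect to be the real obstacle is the lower bound, and within it the assertion that all the $ a_i $ lie in one $ K $-orbit. This hinges on the Kostant--Rallis surjectivity $ \C[\lie{g}]^G \twoheadrightarrow \C[\lie{s}]^K $, which forces $ \lie{s}\git K \hookrightarrow \lie{g}\git G $ and thereby the coincidence of $ G $- and $ K $-conjugacy for semisimple elements of $ \lie{s} $; one must be careful that $ a $ is $ \lie{g} $-semisimple (even though, as an element of $ \gR $, it is hyperbolic rather than elliptic) and that $ \Korbit_a $ is genuinely closed, so that closed $ K $-orbits are indeed separated by $ K $-invariants. Everything else is either the formal behaviour of asymptotic cones from \S\ref{section:asymptotic.cone} or the Borho--Kraft theorem for $ G $.
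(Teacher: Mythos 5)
Your proof is correct and follows the same overall strategy as the paper's: establish the lower bound $\closure{\Gorbit_x \cap \lie{s}} \subseteq \affAcone{\Korbit_a}$ by the one-parameter limit $\Ad(\exp(t h_i))a_i \to x_i$ (projectively, after rescaling) together with $K$-conjugacy of the semisimple elements $a_i$; establish the upper bound $\affAcone{\Korbit_a} \subseteq \closure{\Gorbit_x}\cap\lie{s}$ via Borho--Kraft applied to $\Gorbit_a = \Gorbit_h$; and close by equidimensionality. The one genuine difference in mechanism is how you show all the $a_i$ lie on a single $K$-orbit: the paper deduces this from the fact that the little Weyl group of the restricted root system has representatives in $K$ (Knapp, Cor.\ 6.55), whereas you deduce it from the Kostant--Rallis surjectivity of the restriction $\C[\lie{g}]^G \to \C[\lie{s}]^K$ combined with the separation of closed $K$-orbits by invariants. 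Both are valid; yours has the advantage of not needing to first conjugate the $a_i$ into a common Cartan subspace of $\lier{s}$, and it makes the embedding $\lie{s}\git K \hookrightarrow \lie{g}\git G$ do the work explicitly, while the paper's version is more elementary. You also make explicit the dimension count $\dim\Korbit_a = \tfrac12\dim\Gorbit_x$ (via isotropy and rank of $\ad a$) and give a direct moment-map dimension argument for the Richardson property of each $\Korbit_{x_i}$; the paper instead appeals to its Remark~\ref{remark:closure.of.intersection.of.Gorbit.and.s}(1) and to \cite{Noel.2005} for these two points, so your write-up is somewhat more self-contained on those.
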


\begin{proof}
Since $ x $ is even nilpotent by assumption, 
the $ K $-orbit $ \Korbit_x $ is a Richardson orbit corresponding to the $ \theta $-stable parabolic $ \lie{p} $ in 
\eqref{eqn:parabolic.from.z.grading}.  
See \cite{Noel.2005} for details.  
For $ 1 \leq i \leq \ell $, because $ x_i $ is a $ G $-translate of $ x $, they are all even nilpotent.  
Thus the same reasoning can be applied to the orbits $ \Korbit_{x_i} $ which tells us that they are all Richardson.  

Now let us consider $ a = \iunit ( x - y ) $.  Then we calculate
\begin{equation*}
\exp ( t \ad h ) a 
= \iunit ( e^{2 t} x - e^{ - 2 t} y ) 
= \iunit e^{2 t} ( x - e^{ - 4 t} y ) .
\end{equation*}
Therefore we get in $ \projof{ \llg \oplus \C } $, 
\begin{equation*}
[ \exp ( t \ad h ) a \oplus 1 ] 
=  [ (x - e^{ - 4 t} y) \oplus (- \iunit e^{-2 t} ) ]
\to [ x \oplus 0 ] \in \kappa( \projof{\llg} )
\qquad 
( t \to \infty ) .
\end{equation*}
This proves that 
$ x \in \affAcone{\Korbit_a} $ and hence 
$ \closure{\orbit_x} \subset \affAcone{\Korbit_a} $ because $ \affAcone{\Korbit_a} $ 
is a $ K $-invariant closed set.
By the same reason, we get 
$ \closure{\orbit_{x_i}} \subset \affAcone{\Korbit_{a_i}} $, 
where $ a_i $ is defined similarly as $ a $ by using $ x_i $ instead of $ x $.  


The semisimple elements $ a_i $'s are in fact all conjugate to $ a $ by the adjoint action of $ K $.  
This follows from the fact that representatives of the little Weyl group (the Weyl group of the restricted root system) 
can be chosen from the elements in $ K $ (\cite[Corollary 6.55]{Knapp.2002}).

Thus we have proved that the right hand side is contained in the asymptotic cone $ \affAcone{\Korbit_a} $.

On the other hand, from Theorem \ref{theorem:Borho.Kraft}, we clearly have
\begin{equation*}
\affAcone{ \Korbit_a } \subset \affAcone{ \Gorbit_a } \cap \lie{s} \subset \closure{\Gorbit_x} \cap \lie{s} .
\end{equation*}
Thus we get 
\begin{equation*}
\closure{\Gorbit_x \cap \lie{s}} \subset \affAcone{ \Korbit_a } \subset \closure{\Gorbit_x} \cap \lie{s} .
\end{equation*}
Note that $ \closure{\Gorbit_x \cap \lie{s}} $ is a union of all irreducible components of $ \closure{\Gorbit_x} \cap \lie{s} $ 
of maximal dimension $ \frac{1}{2} \dim \Gorbit_x $ (cf.~Remark \ref{remark:closure.of.intersection.of.Gorbit.and.s}(1) below).  
Since $ \affAcone{ \Korbit_a } $ is equi-dimensional, it must coincide with $ \closure{\Gorbit_x \cap \lie{s}} $.
\end{proof}

\begin{remark}
\label{remark:closure.of.intersection.of.Gorbit.and.s}
\begin{thmenumerate}
\item
The inclusion $ \closure{\Gorbit_x \cap \lie{s}} \subset \closure{\Gorbit_x} \cap \lie{s} $ 
might be strict.  For example, consider a symmetric pair $ ( G, K ) = ( \GL_{2n} , \GL_n \times \GL_n ) $ 
which is associated to $ \U(n, n) $.  
Take the nilpotent $ G $-orbit $ \Gorbit $ of Jordan type $ [3\cdot 1^{2 n - 3}] $.  
Then $ \closure{\Gorbit_x \cap \lie{s}} $ consists of the $ K $-orbits whose 
signed Young diagrams are 
\begin{align*}
&
[(+-+)\cdot(+)^{n - 2}\cdot(-)^{n -1}], \quad
[(-+-)\cdot(+)^{n - 1}\cdot(-)^{n -2}], \quad \\
&
[(+-)\cdot(-+)\cdot(+)^{n - 2}\cdot(-)^{n -2}], \quad \\
&
[(+-)\cdot(+)^{n - 1}\cdot(-)^{n -1}], \quad
[(-+)\cdot(+)^{n - 1}\cdot(-)^{n -1}], \quad \\
&
[(+)^{n}\cdot(-)^{n}], 
\end{align*}
while the $ K $-orbits 
\begin{math}
[(+-)^2\cdot(+)^{n - 2}\cdot(-)^{n -2}] \text{ and } 
[(-+)^2\cdot(+)^{n - 2}\cdot(-)^{n -2}] 
\end{math}
are not contained in the closure but contained in $ \closure{\Gorbit_x} \cap \lie{s} $.  
See the Hasse diagram of the closure relation below.
\\
\begin{table}[h!]
{
\scriptsize
\newcommand{\smallvdots}{\makebox[0pt]{\raisebox{-.19em}{$\cdot$}}\makebox[0pt]{\raisebox{.1em}{$\cdot$}}\makebox[0pt]{\raisebox{.39em}{$\cdot$}}}
\hfil
\begin{minipage}{.4\textwidth}
\hfil{\normalsize$ \affAcone{\Korbit_a} = \closure{\Gorbit_x \cap \lls} $}\hfil
\\[7pt]
\xymatrix @R-25pt @C-15pt {
& {\young({+}-+,+,\smallvdots,-)} \ar[dr] & & \ar[dl] {\young({-}+-,+,\smallvdots,-)} & \\
& & \ar[dl] {\young({+}-,-+,+,\smallvdots,-)} \ar[dr] & & \\
& {\young({+}-,+,\smallvdots,-)} \ar[dr] & & \ar[dl] {\young({-}+,+,\smallvdots,-)} & \\
& & {\young({+},\smallvdots,-)} & & 
}
\end{minipage}
\begin{minipage}{.4\textwidth}
\hfil{\normalsize$ \closure{\Gorbit_x} \cap \lls $}\hfil
\\[7pt]
\xymatrix @R-25pt @C-18pt {
& {\young({+}-+,+,\smallvdots,-)} \ar[dr] & & \ar[dl] {\young({-}+-,+,\smallvdots,-)} & \\
{\young({+}-,+-,+,\smallvdots,-)} \ar[dr] & & \ar[dl] {\young({+}-,-+,+,\smallvdots,-)} \ar[dr] & & \ar[dl] {\young({-}+,-+,+,\smallvdots,-)} \\
& {\young({+}-,+,\smallvdots,-)} \ar[dr] & & \ar[dl] {\young({-}+,+,\smallvdots,-)} & \\
& & {\young({+},\smallvdots,-)} & & 
}
\end{minipage}
\hfil
}
\end{table}

\item
The collection of $ \{ \Korbit_{x_i} \; ( 0 \leq i \leq \ell ) \} $ is a set of Richardson orbits 
which are the moment map image of the conormal bundle of closed $ K $-orbits in 
the fixed partial flag variety $ \flagvariety_P $ through $ \theta $-stable parabolics 
(not necessarily all of them).  
Let us denote a closed $ K $-orbit in $ \flagvariety_P $ by $ \forbit_i $ which corresponds to 
the Richardson orbit $ \Korbit_{x_i} $.  
If $ K_{x_i} $ is connected, 
the moment map $ \mu_i : T_{\forbit_i}^{\ast} \flagvariety_P \to \closure{\Korbit_{x_i}} $ is a resolution of the singularities 
(see Proposition 5.9 and \S 8.8 of \cite{Jantzen.2004}).  
\end{thmenumerate}
\end{remark}

Since $ a \in \sR $ is a real hyperbolic element, it naturally defines a real parabolic subalgebra $ \lier{p} $, 
which is the non-negative part of the $ \Z $-grading similar to \eqref{eqn:parabolic.from.z.grading}
 with respect to $ \ad a $ instead of $ \ad h $.    
Let us denote by $ \PR $ the corresponding real parabolic subgroup of $ \GR $.  
A parabolically induced representation from a character $ \chi $ of $ \PR $ is called 
the \emph{degenerate principal series representation}, 
which is denoted by $ I_{\PR}(\chi) = \Ind_{\PR}^{\GR} \chi $.

\begin{corollary}
\label{cor:asymptotic.cone.as.associated.variety.of.deg.PS}
We assume $ x \in \lie{s} $ is even nilpotent and use the setting of Theorem \ref{theorem:asymptotic.cone.of.semisimple.orbit}.
Let $ I_{\PR}(\chi) $ be a degenerate principal series representation of $ \GR $, where $ \PR $ is obtained from $ a \in \sR $ as above.  
Then the associated variety of $ I_{\PR}(\chi) $ is equal to 
the asymptotic cone $ \affAcone{\Korbit_a} $ $($see Equation \eqref{eqn:asymptotic.cone.of.semisimple.orbit}$)$.  
\end{corollary}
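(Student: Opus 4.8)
The plan is to identify the associated variety of the degenerate principal series $I_{\PR}(\chi)$ with the asymptotic cone $\affAcone{\Korbit_a}$ by comparing two filtrations. First I would recall the general principle (going back to Vogan and Barbasch--Vogan, and worked out for parabolic induction by e.g. Borho--Brylinski and Kashiwara--Vergne): for a Harish-Chandra module $X$ of $\GR$, the associated variety $\AssVar(X) \subset \lie{s}^{\ast} \simeq \lie{s}$ is the support of $\grade X$ as a module over $\grade U(\lie{g}) = S(\lie{g}) = \C[\lie{g}^{\ast}]$, and for a module that is $K$-admissible this equals the support of the $K$-module $X$ interpreted geometrically through its $K$-type multiplicities. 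The key point is that $I_{\PR}(\chi)$, as a $(\lie{g},K)$-module, has its restriction to $K$ given by $\Ind_{\KR \cap \PR}^{\KR}(\chi|_{\KR\cap\PR})$, which by Frobenius reciprocity is the space of sections of a line bundle over the compact flag variety $\KR/(\KR\cap \PR)$, whose complexification is exactly the closed $K$-orbit $\forbit \subset \flagvariety_P$ associated to the $\theta$-stable parabolic $\lie{p}$ of \eqref{eqn:parabolic.from.z.grading}.

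Next I would set up the comparison precisely. The $K$-type decomposition of $I_{\PR}(\chi)$ matches, up to a shift by the character $\chi$ (which does not affect the leading term and hence not the associated variety), the $K$-type decomposition of $\C[K\times_Q \lie{u}(\lie{s})] = \C[T^{\ast}_{\forbit}\flagvariety_P]$ — this is the algebraic Peter--Weyl / Borel--Weil computation. By the discussion in Section~\ref{section:Richardson.orbit.for.symmetric.pair}, the moment map $\mu: T^{\ast}_{\forbit}\flagvariety_P \to \closure{\Korbit_x}$ is, since $x$ is even, a resolution (here I invoke Remark~\ref{remark:closure.of.intersection.of.Gorbit.and.s}(2): each $\Korbit_{x_i}$ is the moment image of a conormal bundle of a closed $K$-orbit in $\flagvariety_P$, and more relevantly for the single orbit $\forbit$ through $\lie{p}$ one gets $\closure{\Korbit_x}$). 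Actually the subtlety is that the associated variety of $I_{\PR}(\chi)$ should come out as the \emph{full} union $\bigcup_i \closure{\Korbit_{x_i}}$, not just one component; this is because $\flagvariety_P$ may contain several closed $K$-orbits through $\theta$-stable parabolics, and $\Restrict_{\KR}I_{\PR}(\chi)$ is built from the full real flag variety $\GR/\PR$ whose associated $K$-picture is the whole $\closure{\Gorbit_x}\cap\lie{s}$ (or rather its maximal-dimensional part). The cleanest route: compute $\dim\AssVar(I_{\PR}(\chi))$ and its $K$-type multiplicities, show they agree with those of $\C[\affAcone{\Korbit_a}]$, and conclude by the equidimensionality and the containment established in Theorem~\ref{theorem:asymptotic.cone.of.semisimple.orbit}.

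Concretely, the steps in order: (1) identify $\Restrict_{\KR} I_{\PR}(\chi)$ with sections of a line bundle on $\GR/\PR$, hence with a $K$-module whose graded/geometric support is $\closure{\Gorbit_e}\cap\lie{s} = \closure{\Gorbit_x}\cap\lie{s}$ by the Sekiguchi--Vergne correspondence; (2) pass to the associated graded and identify $\grade I_{\PR}(\chi)$ with (the pushforward of) $\sheafof{T^{\ast}_{\forbit}\flagvariety_P}$-type data, so that $\AssVar(I_{\PR}(\chi))$ is a union of closures of Richardson $K$-orbits in $\lie{s}$, all of dimension $\tfrac12\dim\Gorbit_x$; (3) invoke Theorem~\ref{theorem:asymptotic.cone.of.semisimple.orbit} to recognize this union, being an equidimensional $K$-stable closed subvariety of $\closure{\Gorbit_x}\cap\lie{s}$ of the right dimension containing $\closure{\Korbit_x}$, as exactly $\closure{\Gorbit_x\cap\lie{s}} = \affAcone{\Korbit_a}$. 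The main obstacle I anticipate is step (2): controlling the associated variety of an induced representation well enough to see that \emph{all} maximal-dimensional components appear (not fewer), which requires knowing that the full real flag $\GR/\PR$ "sees" all of $\Gorbit_x\cap\lie{s}$ — equivalently that the closed $K$-orbits in $\flagvariety_P$ through $\theta$-stable parabolics, as their conormal images sweep out, cover the maximal-dimensional part of $\closure{\Gorbit_x}\cap\lie{s}$. This is where one must either appeal to the known theory of associated varieties of degenerate principal series (Trapa, and the general $A_{\lie{p}}(\lambda)$ machinery) or give a direct geometric argument via the asymptotic-cone description already proved.
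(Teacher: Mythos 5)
Your detailed outline (steps 1--3) and the paper's proof diverge at the crucial point of obtaining the \emph{equality} $\AssVar(I_{\PR}(\chi)) = \affAcone{\Korbit_a}$ rather than just the inclusion. The paper establishes $\AssVar(I_{\PR}(\chi)) \subset \affAcone{\Korbit_a}$ much as you envision: the $G$-saturation of $\AssVar(I_{\PR}(\chi))$ is $\closure{\Gorbit_x}$ (the Richardson $G$-orbit closure for $P$), and combined with equidimensionality and Theorem~\ref{theorem:asymptotic.cone.of.semisimple.orbit} this yields the inclusion. But for the reverse, the paper does \emph{not} attempt to track which conormal-bundle pieces contribute, which is exactly the step you flag as your main obstacle in (2). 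Instead it runs an asymptotic $K$-multiplicity comparison: $\C[\AssVar(I_{\PR}(\chi))]$ has the same asymptotic $\KR$-types as $I_{\PR}(\chi)\restrict_{\KR}$; for $\chi=\trivial$ one has $I_{\PR}(\trivial)\restrict_{\KR}\simeq \Ind_{\MMR}^{\KR}\trivial \simeq \C[\Korbit_a]$ with $\MMR=Z_{\KR}(a)$; the asymptotic multiplicities of $\C[\Korbit_a]$ dominate those of $\C[\affAcone{\Korbit_a}]$ (via the $\grade$-surjection $\C[\Korbit_a]\simeq \C[\lie{s}]/\grade I \surjection \C[\affAcone{\Korbit_a}]$), and the already established inclusion gives a restriction surjection $\C[\affAcone{\Korbit_a}]\surjection \C[\AssVar(I_{\PR}(\trivial))]$. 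Closing this chain of asymptotic inequalities forces $\AssVar(I_{\PR}(\trivial))=\affAcone{\Korbit_a}$. This comparison through $\C[\Korbit_a]$, rather than through resolution geometry, is the idea your plan is missing.

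A secondary problem with your step (1): knowing that $I_{\PR}(\chi)\restrict_{\KR}$ is sections of a line bundle over $\GR/\PR$ produces only a $\KR$-module, and a $\KR$-module by itself has no intrinsic ``graded/geometric support'' in $\lie{s}$; that support \emph{is} the associated variety, which is determined by the $U(\lie{g})$-filtration, not by the $K$-types alone. Asserting that the support equals $\closure{\Gorbit_x}\cap\lie{s}$ ``by the Sekiguchi--Vergne correspondence'' is therefore circular: Sekiguchi--Vergne relates $\GR$-nilpotent orbits in $\gR$ to $K$-nilpotent orbits in $\lie{s}$, but it does not compute the support of $\grade I_{\PR}(\chi)$. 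Your parenthetical ``cleanest route'' (match $K$-type multiplicities, then invoke equidimensionality) is in fact what the paper does, but to carry it out you must insert the intermediate identification $I_{\PR}(\trivial)\restrict_{\KR}\simeq\C[\Korbit_a]$ and exploit the two-sided squeeze against $\C[\affAcone{\Korbit_a}]$.
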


\begin{proof}
It is known that the $ G $-hull of the associated variety $ \AssVar(I_{\PR}(\chi)) $ is 
the closure of the Richardson $ G $-orbit associated to $ P $.  
Thus, by Theorem \ref{theorem:asymptotic.cone.of.semisimple.orbit}, we have $ \AssVar(I_{\PR}(\chi)) \subset \affAcone{\Korbit_a} $.  
Note that the function ring 
$ \C[ \AssVar(I_{\PR}(\chi)) ] $ is asymptotically 
isomorphic to the space of $ \KR $-finite vectors in $ I_{\PR}(\chi) $ as $ \KR $-modules.
If $ \chi $ is trivial, we have
\begin{equation*}
I_{\PR}(\trivial) \restrict_{\KR} \simeq \Ind_{\MMR}^{\KR} \trivial \simeq \C[ \Korbit_a ] , 
\qquad 
\MMR = Z_{\KR}(a) .
\end{equation*}
Therefore, asymptotically $ \C[ \AssVar(I_{\PR}(\trivial)) ] $ and $ \C[\affAcone{\Korbit_a}] $ are equal.  
So they must coincide with each other.
\end{proof}

\begin{remark}
The wave front set of $ I_{\PR}(\chi) $ is known by the results in \cite{Barbasch.Bozicevic.1999} (see also \cite{Barbasch.2000}).  
Therefore, using Schmid-Vilonen's theorem \cite{Schmid.Vilonen.2000}, 
we basically know the associated variety of $ I_{\PR}(\chi) $.  
Here, in the corollary above, 
the emphasis is on the coincidence with the asymptotic cone.
\end{remark}

The conclusion of 
Corollary \ref{cor:asymptotic.cone.as.associated.variety.of.deg.PS} 
does not contain the even nilpotent element $ x $ explicitly.  
In fact, it is plausible to believe the conclusion is always true.

\begin{problem}
Let $ a \in \sR $ be a hyperbolic semisimple element and define the parabolic $ \lier{p} $ as above.  
Does the associated variety of the degenerate principal series $ I_{\PR}(\chi) $ coincide with 
the asymptotic cone $ \affAcone{\Korbit_a} $?  
\end{problem}

\begin{remark}
\begin{thmenumerate}
\item
For a general $ a \in \sR $, it is no longer true that the asymptotic cone $ \affAcone{\Korbit_a} $ is equal to 
the intersection of the closure of the Richardson orbit and $ \lie{s} $.  
For this, we refer to an example in \cite[Example 3.8]{Matumoto.Trapa.2007}. 
\item
There is a formula for the asymptotic $ K $-support by T.~Kobayashi, which is very close to the above problem.  
His formula (\cite[Theorem 6.4.3]{Kobayashi.2005}) implies  
\begin{equation*}
\mathoperator{AS}_K( I_{\PR}(\chi) \restrict_{\KR} ) = C^+ \cap \iunit \Ad^{\ast}(\KR) ( \lier{m} )^{\bot} ,
\end{equation*}
where $ C^+ $ denotes the closed Weyl chamber inside $ \iunit \lier{t}^{\ast} $.  
However, up to now, we do not know the exact relation of the above formula to our problem.
\end{thmenumerate}
\end{remark}

\begin{corollary}
Suppose that $ x \in \lie{s} $ is even nilpotent which satisfies 
\begin{thmenumerate}
\item
the fixed point subgroup $ K_x $ is connected, 
\item
$ \closure{\Korbit_x} $ is normal,
\item
$ \codim \partial \Korbit_x \geq 2 $, where $ \partial \Korbit_x = \closure{\Korbit_x} \setminus \Korbit_x $ is 
the boundary of $ \Korbit_x $.
\end{thmenumerate}
Then the intersection $ \Gorbit_x \cap \lie{s} = \Korbit_x $ consists of a single $ K $-orbit.  
If we take a KS triple $ \{ x , h, y \} $ as above, 
the asymptotic cone of the semisimple orbit $ \Korbit_a $ $ ( a = \iunit ( x - y ) ) $ is given by 
$ \affAcone{\Korbit_a} = \closure{\Korbit_x} $.  
In this case, we have isomorphisms of algebra 
\begin{equation*}
\C[ T_{\forbit}^{\ast} \flagvariety_P] \simeq \C[ \Korbit_x ] \simeq \C[ \closure{\Korbit_x} ] ,
\end{equation*}
and, as $ K $-modules, they are isomorphic to $ \C[ \Korbit_a ] $.
\end{corollary}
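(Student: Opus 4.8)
The plan is to derive everything from Theorem~\ref{theorem:asymptotic.cone.of.semisimple.orbit} together with the structural facts about the moment map for $\theta$-stable parabolics recalled in \S\ref{section:Richardson.orbit.for.symmetric.pair}. First I would show that the three hypotheses force $\Gorbit_x \cap \lie{s}$ to be a single $K$-orbit. By Theorem~\ref{theorem:asymptotic.cone.of.semisimple.orbit} we already know $\affAcone{\Korbit_a} = \closure{\Gorbit_x \cap \lie{s}} = \bigcup_{i=0}^{\ell} \closure{\Korbit_{x_i}}$, and by the preceding Proposition (since $x$, hence each $x_i$, is even) each $\Korbit_{x_i}$ is the dense orbit in the image of a conormal bundle $\mu_i : T_{\forbit_i}^{\ast}\flagvariety_P \to \closure{\Korbit_{x_i}}$; moreover all $\Korbit_{x_i}$ have the same dimension $\tfrac12\dim\Gorbit_x = \dim\Korbit_x$. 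So I need to rule out $\ell \geq 1$, i.e. rule out a second $K$-orbit of the same dimension inside $\closure{\Gorbit_x}\cap\lie{s}$. The key point is that $\closure{\Gorbit_x}\cap\lie{s}$ carries the regular function ring $\C[\Gorbit_x]\restrict_{\lie{s}}$, and I would compare this with $\C[\closure{\Korbit_x}]$ via the chain of maps $\C[\closure{\Gorbit_x}] \to \C[\closure{\Gorbit_x}\cap\lie{s}] \to \C[\closure{\Korbit_x}]$. Condition (ii), normality of $\closure{\Korbit_x}$, together with condition (iii), $\codim_{\closure{\Korbit_x}} \partial\Korbit_x \geq 2$, gives $\C[\closure{\Korbit_x}] = \C[\Korbit_x]$; and condition (i), connectedness of $K_x$, makes $\C[\Korbit_x] \simeq \Ind_{K_x}^{K}\trivial$ an honest induced module with no multiplicities lost.

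The second step is to run the Borho--Kraft-style function-ring argument in the symmetric setting. Using the even-nilpotent Proposition of \S\ref{section:Richardson.orbit.for.symmetric.pair} (the analogue of the $\lie{g}$-side Proposition following Theorem~\ref{theorem:Borho.Kraft}), one has $Z_K(x) = Z_Q(x)$, so the moment map $\mu : T_{\forbit}^{\ast}\flagvariety_P \to \closure{\Korbit_x}$ is birational and induces $\C[T_{\forbit}^{\ast}\flagvariety_P] = \C[K\times_Q \lie{u}(\lie{s})] = \C[K\times\lie{u}(\lie{s})]^Q \simeq \C[\Korbit_x]$. Normality of $\closure{\Korbit_x}$ upgrades this to $\C[\closure{\Korbit_x}] \simeq \C[\Korbit_x] \simeq \C[T_{\forbit}^{\ast}\flagvariety_P]$, which is the displayed algebra isomorphism. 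For the $K$-module identification with $\C[\Korbit_a]$, I would invoke the Lemma of \S\ref{section:asymptotic.cone}: since $\Korbit_a$ is a closed $K$-orbit with $\affAcone{\Korbit_a} = \closure{\Korbit_x}$, there is a surjection $\C[\Korbit_a] \surjection \C[\affAcone{\Korbit_a}] = \C[\closure{\Korbit_x}]$ that is an isomorphism on the level of graded/asymptotic $K$-modules; combined with $\C[\Korbit_a] \simeq \Ind_{Z_K(a)}^{K}\trivial$ and $\C[\Korbit_x] \simeq \Ind_{Z_K(x)}^{K}\trivial = \Ind_{Z_Q(x)}^{K}\trivial$, a dimension count of $\tau$-isotypic components (both equal $\dim\tau^{Z_K(a)}$ once one checks $Z_K(a)$ and $Z_Q(x)$ have the same invariants, which follows because $a$ and $h$ are $K$-conjugate after adjusting, exactly as in the proof of the main theorem via the little Weyl group) shows the surjection is an isomorphism of $K$-modules, hence $\C[\closure{\Korbit_x}] \simeq \C[\Korbit_a]$ as $K$-modules.

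The step I expect to be the main obstacle is the very first one: showing $\ell = 0$, i.e. that under (i)--(iii) there is genuinely only one $K$-orbit in $\Gorbit_x \cap \lie{s}$. The difficulty is that $\Gorbit_x \cap \lie{s}$ is a priori reducible with several equidimensional Lagrangian components, and reducibility is the generic phenomenon for symmetric pairs (as the example in Remark~\ref{remark:closure.of.intersection.of.Gorbit.and.s}(1) shows). The mechanism I would rely on is that the restriction map $\C[\Gorbit_x] \to \bigoplus_i \C[\Korbit_{x_i}]$ together with the Kostant--Rallis branching $\C[\Gorbit_x]\restrict_{\lie{s}} = \C[\Gorbit_x \cap \lie{s}]$, when the latter is forced by normality/codimension conditions to be a normal domain $\C[\closure{\Korbit_x}]$, cannot surject onto a product of more than one nonzero factor; so the number of components must be one. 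An alternative, possibly cleaner route is to use that under (i) the multiplicity formula $m_\tau(\closure{\Korbit_x}) = \dim\tau^{K_x}$ already accounts for the full multiplicity $m_\tau(\affAcone{\Korbit_a}) = m_\tau(\Korbit_a) = \dim\tau^{Z_K(a)}$ coming from Theorem~\ref{theorem:asymptotic.cone.of.semisimple.orbit} and the $G$-side count of Theorem~\ref{theorem:Borho.Kraft}; if $\affAcone{\Korbit_a}$ had a second component $\closure{\Korbit_{x_1}}$ it would contribute strictly more to some $m_\tau(\affAcone{\Korbit_a})$ than $\dim\tau^{K_x}$, contradicting the equality. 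Pinning down which of these arguments is rigorous enough — in particular verifying the needed comparison $\dim\tau^{Z_K(a)} = \dim\tau^{K_x}$ uses only (i) and the Cayley/KS construction — is where the real work lies.
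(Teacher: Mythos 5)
Your treatment of the second half of the corollary (the algebra isomorphisms $\C[T^{\ast}_{\forbit}\flagvariety_P] \simeq \C[\Korbit_x] \simeq \C[\closure{\Korbit_x}]$) matches the paper: connectedness of $K_x$ makes the moment map a resolution of singularities, normality plus the codimension-two boundary condition gives $\C[\closure{\Korbit_x}] \xrightarrow{\sim} \C[\Korbit_x]$, and Jantzen's Proposition 8.9 gives $\C[T_{\forbit}^{\ast}\flagvariety_P] \simeq \C[\Korbit_x]$. That part is fine.

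The gap is exactly where you flag it: proving $\ell = 0$. Neither of your two mechanisms closes it. The first (restriction-map / "normal domain cannot surject onto a product") is not well posed: normality of one component $\closure{\Korbit_x}$ says nothing about the ring of $\closure{\Gorbit_x \cap \lie{s}}$, and the functions on a union of two equidimensional components are not a quotient of functions on one of them, so there is no "domain surjecting onto a product" to contradict. The second (multiplicity comparison) requires the equality $\dim\tau^{Z_K(a)} = \dim\tau^{K_x}$ for all $\tau$; but this equality, via Frobenius reciprocity, is precisely the $K$-module isomorphism $\C[\Korbit_a] \simeq \C[\Korbit_x]$ that appears in the conclusion of the corollary, and in the paper's logic it is \emph{deduced from} $\ell = 0$, not used to prove it. Deriving it from (i)--(iii) independently would require essentially re-proving the corollary by other means, and you give no route to it; so as written the argument is circular.

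The paper's actual mechanism is representation theoretic and does not appear in your proposal. By Corollary \ref{cor:asymptotic.cone.as.associated.variety.of.deg.PS}, $\affAcone{\Korbit_a}$ is the associated variety of a degenerate principal series $I_{\PR}(\chi)$, and for generic $\chi$ this representation is irreducible. Vogan's theorem (\cite[Theorem 4.6]{Vogan.1991}) on associated varieties of irreducible Harish-Chandra modules then says that any two irreducible components of the associated variety must be connected through a $K$-orbit of codimension one lying in the boundary of both. Hypothesis (iii) forbids any codimension-one orbit in $\partial\Korbit_x$, so $\closure{\Korbit_x}$ cannot be linked to a second component, and hence there is only one. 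This is the missing ingredient: the irreducibility is forced by a structural theorem about irreducible $(\mathfrak{g},K)$-modules, not by a purely commutative-algebraic or multiplicity-counting argument.
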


\begin{proof}
We use the following lemma.  
Let us recall the notation $ m_{\tau}(\mathfrak{X}) $ for the multiplicity defined in \eqref{eqn:notation.multiplicity}.

\begin{lemma}
The following inequality holds.
\begin{equation*}
m_{\tau}( \Korbit_a ) \geq 
m_{\tau}(\affAcone{\Korbit_a}) \geq 
m_{\tau}(\closure{\Korbit_{x_i}}) \qquad ( \tau \in \irreps{K}) .
\end{equation*}
\end{lemma}

\begin{proof}
Let us denote the annihilator ideal of $ \Korbit_a $ by $ I = \idealof{\Korbit_a} \subset \C[\lie{s}] $.  
Then we have $ \C[\Korbit_a] \simeq \C[\lie{s}] / \grade I $ as $ K $-modules.  
Moreover, there is a surjective algebra morphism 
$ \C[\lie{s}] / \grade I \surjection \C[\lie{s}] / \sqrt{\grade I} = \C[\affAcone{\Korbit_a}] $.  
Since this morphism is $ K $-equivariant, 
we have the following inequality
\begin{equation*}
m_{\tau}( \Korbit_a ) \geq m_{\tau}(\affAcone{\Korbit_a}) \qquad ( \tau \in \irreps{K}) .
\end{equation*}
Since $ \closure{\Korbit_{x_i}} $ in 
Theorem \ref{theorem:asymptotic.cone.of.semisimple.orbit} 
is an irreducible component of $ \affAcone{\Korbit_a} $, we also have an inequality 
\begin{math}
m_{\tau}(\affAcone{\Korbit_a}) \geq m_{\tau}(\closure{\Korbit_{x_i}}) .
\end{math}
\end{proof}

Let us return to the proof of the corollary.

By Theorem \ref{theorem:asymptotic.cone.of.semisimple.orbit}, 
we know $ \affAcone{\Korbit_a} $ is the union of $ \closure{\orbit}_{x_i} $'s.  
By Corollary \ref{cor:asymptotic.cone.as.associated.variety.of.deg.PS}, $ \affAcone{\Korbit_a} $ is an associated variety of a 
degenerate principal series $ I_{\PR}(\chi) $.  
For a generic parameter $ \chi $, the degenerate principal series representation is irreducible.  
So by Vogan's theorem (\cite[Theorem 4.6]{Vogan.1991}), 
if there are more than two irreducible components of the associated variety, they must 
have a codimension one orbit in its boundary.  
But by the assumption, there is no such orbit, hence it must be irreducible.

The normality and the codimension-two condition imply the isomorphism 
$ \C[\closure{\orbit_x}] \xrightarrow{\sim} \C[\orbit_x] $.  
Since $ K_x $ is connected the moment map $ \mu : T_{\forbit}^{\ast} \flagvariety_P \to \closure{\Korbit_x} $ is 
a resolution.  By \cite[Proposition 8.9]{Jantzen.2004}, we get $ \C[ T_{\forbit}^{\ast} \flagvariety_P] \simeq \C[ \Korbit_x ] $.  
\end{proof}

\section{Example: Siegel parabolics}
\label{section:example.unn}

Let $ \GR = \U(n, n) $ and $ \KR = \U(n) \times \U(n) $ a maximal compact subgroup.  
Then $ G = \GL_{2n}(\C) $ is the complexification of $ \GR $ 
and $ K = \GL_n(\C) \times \GL_n(\C) $ is block diagonally embedded into $ G $.  
$ ( G, K ) $ is a symmetric pair.  
The Cartan decomposition $ \llg = \llk \oplus \lls $ is given as follows.
\begin{equation*}
\llk = \{ \begin{pmatrix} A & 0 \\ 0 & D \end{pmatrix} \mid A , D \in \Mat_n(\C) \} , \quad
\lls = \{ \begin{pmatrix} 0 & B \\ C & 0 \end{pmatrix} \mid B , C \in \Mat_n(\C) \}
\end{equation*}
Let us consider a nilpotent element 
\begin{equation*}
x = \begin{pmatrix} 0 & 1_n \\ 0 & 0 \end{pmatrix} \in \lls .
\end{equation*}
If we put $ y = \transpose{x} $ and $ h = [ x, y ] $, then 
$ \{ x, h, y \} $ constitute a KS triple.  
Note that, in this case, 
the complex conjugation $ \sigma $ with respect to the real form $ \gR $ is given by 
\begin{equation*}
\sigma(X) = - I_{n, n} \transpose{\conjugate{X}} I_{n,n} \quad ( X \in \llg ) , \qquad 
I_{n, n} = \begin{pmatrix} 1_n & 0 \\ 0 & - 1_n \end{pmatrix} .
\end{equation*}
We can check $ \sigma(x) = \transpose{x} = y $ directly.  

The nilpotent element $ x $ generates a nilpotent $ G $-orbit $ \Gorbit_x $ which has Jordan type $ [2^n] $.  
Consequently $ x $ is even nilpotent.  
There are $ ( n + 1 ) $ nilpotent $ K $-orbits in $ \Gorbit_x \cap \lls $, which are 
$ \Korbit_{p,q} = [(+-)^p (-+)^q] \; ( p, q \geq 0 , p + q = n ) $ in the notation of signed Young diagram 
(see \cite{Collingwood.McGovern.1993}, for example).  

Put $ a = \iunit ( x - y ) \in \sR $.  
Theorem \ref{theorem:asymptotic.cone.of.semisimple.orbit} tells us that 
\begin{equation*}
\affAcone{ \Korbit_a } = \textstyle\bigcup_{p + q = n} \closure{\Korbit_{p,q}} .
\end{equation*}
Let us interpret the meaning of this identity in terms of the representation theory of $ \GR $.  

First, let us see the function ring $ \C[ \Korbit_a ] $.  
Put $ M = Z_K(a) $, the stabilizer of $ a $ in $ K $.  
Then clearly $ M = \Delta \GL_n(\C) $, 
the diagonal embedding of $ \GL_n(\C) $ into $ K = \GL_n(\C) \times \GL_n(\C) $.
Thus we have
\begin{equation}
\label{eqn:C.Korbit.a.as.induced.representation}
\C[ \Korbit_a ] = \C[ K/M ] = \C[ K ]^M \simeq \Ind_M^K \trivial_M , 
\end{equation}
where the last isomorphism is an isomorphism as $ K $-modules, 
and $ \trivial_M $ denotes the trivial representation of $ M $.
Thus we have 
\begin{equation}
\label{eqn:decomposition.of.C.Korbit.a}
\C[ \Korbit_a ] \simeq \textstyle\bigoplus_{ \rho \in \irreps{\GL_n}} \rho \tensor \rho^{\ast} 
\qquad
(\text{as a $ K \simeq \GL_n \times \GL_n $-module}),
\end{equation}
which is a multiplicity free $ K $-module.  
This is isomorphic to $ \C[ \affAcone{ \Korbit_a } ] $ as a $ K $-module 
by \cite[Theorem 3.1]{NOZ.2006}.

On the other hand, by explicit calculation using the technique in \cite{Nishiyama.MA.2000} 
(also see \cite{Nishiyama.JMK.2004}), we have
\begin{equation*}
\C[ \closure{\Korbit_{p,q}} ] \simeq 
\textstyle\bigoplus\limits_{\alpha \in \partition_p , \, \beta \in \partition_q} 
\rho_{\alpha \composit \beta} \tensor \rho_{\alpha \composit \beta}^{\ast} .
\end{equation*}
However, we have the following

\begin{proposition}
For any $ p, q \geq 0 $ satisfying $ p + q = n $, there are isomorphisms of $ K $-modules
\begin{equation*}
\C[ \Korbit_{p,q} ] \simeq \C[ \affAcone{\Korbit_a} ] \simeq \C[ \Korbit_a ] , 
\end{equation*}
where the first isomorphism is also a morphism of algebras induced by the 
open embedding 
$ \Korbit_{p,q} \injection \affAcone{\Korbit_a} $. 
\end{proposition}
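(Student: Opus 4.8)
The plan is to determine the $K$-module structure of $\C[\Korbit_{p,q}]$ directly from the orbit–stabilizer picture and to match it with the decomposition \eqref{eqn:decomposition.of.C.Korbit.a} of $\C[\Korbit_a]$, which the text has already identified with $\C[\affAcone{\Korbit_a}]$ by \cite[Theorem 3.1]{NOZ.2006}. First I would note that $\Korbit_{p,q}$ is in fact open in $\affAcone{\Korbit_a}$: it is open and dense in its own closure $\closure{\Korbit_{p,q}}$, which by Theorem \ref{theorem:asymptotic.cone.of.semisimple.orbit} is one of the $n+1$ irreducible components of $\affAcone{\Korbit_a}$, and since all these components have the same dimension $\frac{1}{2}\dim\Gorbit_x$ the orbit $\Korbit_{p,q}$ is disjoint from the others, so its complement $\partial\Korbit_{p,q}\cup\bigcup_{(p',q')\neq(p,q)}\closure{\Korbit_{p',q'}}$ is closed. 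Consequently the open embedding $\Korbit_{p,q}\injection\affAcone{\Korbit_a}$ induces a $K$-equivariant homomorphism of algebras $\C[\affAcone{\Korbit_a}]\to\C[\Korbit_{p,q}]$ by restriction of functions (this is the ``morphism of algebras'' appearing in the statement), whose image is the subalgebra $\C[\closure{\Korbit_{p,q}}]$. Since $\partial\Korbit_{p,q}$ has codimension one, $\C[\Korbit_{p,q}]$ is strictly larger than $\C[\closure{\Korbit_{p,q}}]$, and the point of the proposition is that it is nonetheless $K$-isomorphic to the whole of $\C[\affAcone{\Korbit_a}]$; some care will be needed in interpreting ``induced by the open embedding'', because the restriction homomorphism is not itself bijective (as $\affAcone{\Korbit_a}$ is reducible).

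To compute $\C[\Korbit_{p,q}]$ I would use the concrete model of the example: $\lls\cong\Mat_n(\C)\oplus\Mat_n(\C)$ with $(B,C)$ corresponding to $\bigl(\begin{smallmatrix}0&B\\ C&0\end{smallmatrix}\bigr)$ and $K=\GL_n(\C)\times\GL_n(\C)$ acting by $(g_1,g_2)\cdot(B,C)=(g_1Bg_2^{-1},\,g_2Cg_1^{-1})$. Since $\Gorbit_x$ has Jordan type $[2^n]$, the $K$-orbits inside $\Gorbit_x\cap\lls$ are exactly the sets $\Korbit_{p,q}=\{(B,C)\mid BC=CB=0,\ \rank B=p,\ \rank C=q\}$ with $p+q=n$ (a classical fact that should be recorded, together with the fact that each is a single $K$-orbit), and $z_{p,q}=(\,\diag(1_p,0_q),\,\diag(0_p,1_q)\,)$ is a representative. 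A direct matrix computation then yields the stabilizer $H=Z_K(z_{p,q})$ together with a Levi decomposition $H=\Delta(\GL_p\times\GL_q)\ltimes U_H$, where $\Delta(\GL_p\times\GL_q)$ is the diagonal copy inside $K$ and $U_H$ is the abelian unipotent group of pairs $(u_1,u_2)$ with $u_1$ in the unipotent radical of the standard parabolic $P_1$ of the first $\GL_n$ with Levi $\GL_p\times\GL_q$ and $u_2$ in the unipotent radical of the \emph{opposite} parabolic of the second $\GL_n$, so that $U_H\cong\Mat_{p,q}(\C)\oplus\Mat_{q,p}(\C)$. Since $\Korbit_{p,q}=K/H$ we have $\C[\Korbit_{p,q}]=\C[K]^H$, hence $m_{\rho_1\tensor\rho_2}(\C[\Korbit_{p,q}])=\dim(\rho_1^{\ast}\tensor\rho_2^{\ast})^H$ for $\rho_1,\rho_2\in\irreps{\GL_n}$ by Frobenius reciprocity.

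Evaluating these $H$-invariants is the heart of the argument. Taking $U_H$-invariants first: the two factors of $U_H$ act separately, on $\rho_1^{\ast}$ through $P_1$ and on $\rho_2^{\ast}$ through the opposite parabolic, and by the standard description of $U$-invariants in an irreducible highest-weight module, $(\rho_1^{\ast})^{U_1}$ and $(\rho_2^{\ast})^{U_2}$ are irreducible $\GL_p\times\GL_q$-modules whose highest weights are read off respectively from the first $p$ and the last $q$ coordinates of the highest weight of $\rho_1^{\ast}$, and — after the appropriate reversals — from those of $\rho_2^{\ast}$. Imposing invariance under the remaining diagonal $\Delta(\GL_p\times\GL_q)$ then forces these two modules to be mutually dual, and reassembling the ``upper'' $p$ and ``lower'' $q$ coordinates into a single dominant weight of $\GL_n$ shows that this occurs precisely when $\rho_2\cong\rho_1^{\ast}$, in which case the invariant space is one-dimensional. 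Therefore $\C[\Korbit_{p,q}]\cong\bigoplus_{\rho\in\irreps{\GL_n}}\rho\tensor\rho^{\ast}$ as a $K$-module, each summand with multiplicity one — which is exactly \eqref{eqn:decomposition.of.C.Korbit.a}, i.e.\ the $K$-module $\C[\Korbit_a]\cong\C[\affAcone{\Korbit_a}]$. Combined with the restriction homomorphism of the first paragraph, this gives the asserted chain of isomorphisms.

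I expect the main obstacle to be this invariant computation: one must keep careful track of the two \emph{opposite} parabolics so that the top-$p$ block coming from $\rho_1^{\ast}$ and the bottom-$q$ block coming from $\rho_2^{\ast}$ glue into an honest dominant weight of $\GL_n$, and it is precisely here that the representations $\rho\tensor\rho^{\ast}$ lying outside the ``staircase'' range that occurs in $\C[\closure{\Korbit_{p,q}}]$ (cf.\ the displayed formula for $\C[\closure{\Korbit_{p,q}}]$) are recovered on passing to the open orbit. An alternative route, closer in spirit to the rest of the paper, is to replace the stabilizer computation by the resolution $\mu_{p,q}\colon T^{\ast}_{\forbit_{p,q}}\flagvariety_P\cong K\times_{Q_{p,q}}\lie{u}_{p,q}(\lls)\to\closure{\Korbit_{p,q}}$ of Remark \ref{remark:closure.of.intersection.of.Gorbit.and.s}(2) together with the symmetric-pair analogue of the isomorphism $\C[\orbit_x]\simeq\C[T^{\ast}\flagvariety_P]$ from \S\ref{section:Richardson.orbit}, and then to compute $\C[K\times_{Q_{p,q}}\lie{u}_{p,q}(\lls)]=\C[K\times\lie{u}_{p,q}(\lls)]^{Q_{p,q}}$ by Cauchy's formula and Bott's theorem; the amount of bookkeeping is comparable.
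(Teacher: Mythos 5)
The paper states this proposition in the worked example of \S\ref{section:example.unn} without proof, so there is no internal argument to compare against; your proposal is in effect supplying the missing verification, and your primary argument (the second and third paragraphs) is correct. Identifying $\Korbit_{p,q}\simeq K/H$, computing $H=Z_K(z_{p,q})=\Delta(\GL_p\times\GL_q)\ltimes U_H$ with $U_H\simeq\Mat_{p,q}(\C)\oplus\Mat_{q,p}(\C)$ sitting inside a pair of \emph{opposite} unipotent radicals, and evaluating $\dim(\rho_1^{\ast}\tensor\rho_2^{\ast})^H$ by first taking $U_H$-invariants and then $\Delta(\GL_p\times\GL_q)$-invariants does give multiplicity $1$ exactly when $\rho_2\simeq\rho_1^{\ast}$ and $0$ otherwise, which matches the decomposition \eqref{eqn:decomposition.of.C.Korbit.a} for $\C[\Korbit_a]\simeq\C[\affAcone{\Korbit_a}]$. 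Your caveat about the phrase ``morphism of algebras induced by the open embedding'' is also well placed: the restriction map $\C[\affAcone{\Korbit_a}]\to\C[\Korbit_{p,q}]$ is certainly not bijective (its kernel is the ideal of $\closure{\Korbit_{p,q}}$ inside $\affAcone{\Korbit_a}$ and its image is only $\C[\closure{\Korbit_{p,q}}]$), and indeed the two rings cannot even be abstractly isomorphic as algebras, since the source has zero divisors while the target does not --- already for $n=1$ one compares $\C[b,c]/(bc)$ with $\C[b,b^{-1}]$. So the clause should be read loosely, and your implicit choice to prove only the $K$-module isomorphism and record the algebra map separately is the right reading.

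The ``alternative route'' in your closing paragraph, however, does not work. The symmetric-pair analogue of $\C[\orbit_x]\simeq\C[T^{\ast}\flagvariety_P]$ from \S\ref{section:Richardson.orbit} requires $\partial\Korbit_{p,q}$ to have codimension at least two in $\closure{\Korbit_{p,q}}$, and this is precisely what fails in the Siegel example --- you yourself invoke the codimension-one boundary to see that $\C[\Korbit_{p,q}]\supsetneq\C[\closure{\Korbit_{p,q}}]$. Computing $\C[K\times_{Q_{p,q}}\lie{u}_{p,q}(\lls)]$ by Cauchy's formula and Bott vanishing would reproduce $\C[\closure{\Korbit_{p,q}}]\simeq\bigoplus_{\alpha\in\partition_p,\,\beta\in\partition_q}\rho_{\alpha\composit\beta}\tensor\rho_{\alpha\composit\beta}^{\ast}$, the strictly smaller $K$-module displayed just before the proposition, not $\C[\Korbit_{p,q}]$. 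The two routes are therefore not interchangeable: only the orbit--stabilizer computation reaches the open orbit, which is exactly where the extra $K$-types live.
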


Let us denote $ \MMR = Z_{\KR}(a) = \Delta \U(n) $, and 
$ \LR = Z_{\GR}(a) \simeq \GL_n(\C) $.  
The semisimple element $ a $ naturally defines 
a maximal parabolic subgroup $ \PR = \LR \NR $.  
where $ \NR $ is a suitably chosen unipotent radical.  
Note that $ \AR = \exp \R a $ is contained in the center of $ \LR = \GL_n(\C) $ as 
the radial part of the complex torus.  
We consider a degenerate principal series representation 
induced from a one dimensional character of $ \PR $ (unnormalized induction)
\begin{equation*}
I(\nu) := \Ind_{\PR}^{\GR} ( |\det|^{\nu + 2n} \tensor \trivial_{\NR} ) , 
\qquad ( \nu \in \C ) , 
\end{equation*}
where $ \det $ is the determinant character of $ \LR = \GL_n(\C) $ and 
the induced character is trivial on $ \NR $.  
Then we have
\begin{equation*}
I(\nu) \restrict_{\KR} 
\simeq \Ind_{\MMR}^{\KR} \trivial_{\MMR} 
\simeq \textstyle\bigoplus_{ \rho \in \irreps{\U(n)}} \rho \tensor \rho^{\ast} .
\end{equation*}
Comparing this with 
\eqref{eqn:decomposition.of.C.Korbit.a} and 
\eqref{eqn:C.Korbit.a.as.induced.representation}, 
we conclude that $ \Korbit_a $ or $ \affAcone{ \Korbit_a } $ carries information of $ K $-types 
of degenerate principal series $ I(\nu) $.  

\begin{theorem}[Sahi, Lee, Johnson, Wallach, ...]
Assume that $ \nu \geq 0 $ is even.  
Then the degenerate principal series $ I(\nu) $ 
contains precisely $ ( n + 1 ) $ irreducible subrepreesntations 
$ \pi_{p,q}(\nu) \; ( p , q \geq 0 , \; p + q = n ) $, 
which are unitary.  
If $ \nu > 0 $, then these are only unitarizable irreducible constituents of $ I(\nu) $.  
\end{theorem}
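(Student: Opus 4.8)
The plan is to turn the statement into a purely combinatorial problem about the multiplicity-free $(\llg,\KR)$-module $I(\nu)$. Recall that, by the computation preceding the theorem,
\[
I(\nu)\restrict_{\KR}\;\simeq\;\Ind_{\MMR}^{\KR}\trivial_{\MMR}\;\simeq\;\bigoplus_{\rho\in\irreps{\U(n)}}\rho\tensor\rho^{\ast}
\]
as a $\KR=\U(n)\times\U(n)$-module, and this decomposition is multiplicity free. Hence the lattice of $(\llg,\KR)$-submodules of $I(\nu)$ is distributive, and a submodule is completely determined by the set of dominant weights $\lambda\in\Z^{n}$ for which the $K$-type $\rho_{\lambda}\tensor\rho_{\lambda}^{\ast}$ occurs in it. So everything reduces to describing, for $\nu$ a non-negative even integer, the ``quiver'' on the set of such $\lambda$ induced by the $\lls$-action and reading off its minimal closed (i.e.\ submodule-generating) subsets.

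First I would make the $\lls$-action explicit. As a $K$-module $\lls\simeq(\C^{n}\tensor(\C^{n})^{\ast})\oplus((\C^{n})^{\ast}\tensor\C^{n})$, so $\lls$ moves a $K$-type $\rho_{\lambda}\tensor\rho_{\lambda}^{\ast}$ to its neighbours $\rho_{\lambda\pm e_{i}}\tensor\rho_{\lambda\pm e_{i}}^{\ast}$, and by multiplicity one the action along each such arrow is a scalar. Using a standard realization of $I(\nu)$ (the compact picture, or the Casimir eigenvalue, which is forced to be constant on $I(\nu)$) one computes that the raising arrow $\lambda\to\lambda+e_{i}$ carries a linear factor $\nu+c_{i}(\lambda)$ and the lowering arrow $\lambda\to\lambda-e_{i}$ a factor $\nu-c_{i}'(\lambda)$, with $c_{i},c_{i}'$ explicit affine functions of $\lambda$. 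When $\nu$ is a non-negative even integer a prescribed set of these factors vanishes, and a bookkeeping argument shows the quiver breaks into exactly $n+1$ connected pieces at the bottom: the piece in which $p$ of the coordinates of $\lambda$ are pinned high and $q$ are pinned low, $p+q=n$. This produces the $n+1$ irreducible subrepresentations $\pi_{p,q}(\nu)$ and shows the socle contains no others.

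Next I would identify $\pi_{p,q}(\nu)$ with the representation predicted geometrically. By the Proposition proved just above (together with the displayed $\theta$-lift computation of $\C[\closure{\Korbit_{p,q}}]$), the set of $K$-types isolated at the $(p,q)$-component is precisely $\{\rho_{\alpha\composit\beta}\tensor\rho_{\alpha\composit\beta}^{\ast}\mid\alpha\in\partition_{p},\ \beta\in\partition_{q}\}$, so that $\pi_{p,q}(\nu)\restrict_{\KR}\simeq\C[\closure{\Korbit_{p,q}}]$ and in particular $\AssVar(\pi_{p,q}(\nu))=\closure{\Korbit_{p,q}}$, in accordance with Corollary~\ref{cor:asymptotic.cone.as.associated.variety.of.deg.PS}. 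Unitarity then follows from the normalized standard intertwining operator $A(\nu):I(\nu)\to I(-\nu)$, which defines an invariant Hermitian form: its eigenvalue on a $K$-type is the product of the affine factors above, and a positivity check (Sahi's method) shows the form is positive definite on each of the $n+1$ bottom pieces; alternatively, one realizes $\pi_{p,q}(\nu)$ as a $\theta$-lift from a compact member of a dual pair, which makes unitarity automatic. For $\nu>0$ one finally shows no other Jordan--H\"older constituent is unitarizable, by computing the signature of the same Hermitian form on each remaining constituent — some affine factors are now strictly negative, forcing indefiniteness — or by invoking the classification of the relevant part of the unitary dual of $\U(n,n)$.

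The main obstacle is the middle step: pinning down the exact affine factors governing the $\lls$-action (equivalently, the precise reducibility points and submodule lattice of $I(\nu)$) and then the combinatorial count that the socle has exactly $n+1$ pieces, neither more nor fewer. This is most cleanly handled either via a Gindikin--Karpelevich-type product formula for $A(\nu)$ with careful sign bookkeeping, or by an induction on $n$ using translation functors (tensoring with $\C^{n}$), matching each inductive step against the orbit closures $\closure{\Korbit_{p,q}}$. The unitarity and non-unitarity assertions are comparatively routine once the Hermitian form has been written down explicitly, though the signature computation for general even $\nu>0$ still requires care.
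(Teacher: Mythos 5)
This is a theorem that the paper does not prove; it is quoted from the literature (Lee, Sahi, and the references there) and used only to illustrate the paper's main results in the $U(n,n)$ example. So there is no proof in the paper to compare against. What I can do is assess your sketch against the arguments in those references.

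Your overall strategy — exploit that $I(\nu)\restrict_{\KR}$ is multiplicity free, so that the lattice of $(\llg,\KR)$-submodules is determined by which $K$-types occur, and then make the $\lls$-action explicit as a quiver with scalar arrows $\lambda\to\lambda\pm e_i$ — is indeed the approach that Lee and Sahi carry out. The identification of $\lls$ as a $K$-module and the conclusion that each raising/lowering arrow acts by a scalar on the $K$-isotypic lines are correct, and deducing unitarity from the Hermitian form furnished by the normalized standard intertwining operator (or, alternatively, by theta-lifting from a compact dual-pair member) is also the standard route. So the direction is right.

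However, the parts you defer to a ``bookkeeping argument'' and to a ``positivity check'' are not minor details; they are essentially the entire content of the cited papers. Without writing down the explicit linear factors (their zeros and signs as functions of $\lambda$ and $\nu$) there is no way to verify that the socle has exactly $n+1$ components rather than more or fewer, that each of those is irreducible, that they are unitary, or that no other constituent is unitarizable for $\nu>0$. In Lee's analysis these factors come from an explicit differential-operator realization of the $\lls$-action in the noncompact picture, and the count of bottom components falls out of a careful case analysis of where the factors vanish; Sahi instead produces a closed-form Hermitian form via Jack polynomials. Either way, this is several pages of computation that your proposal outlines but does not supply.

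Two further cautions. First, the claim that ``the Casimir eigenvalue is forced to be constant on $I(\nu)$'' determines the scalars is too weak: the Casimir gives one quadratic relation among all the arrow scalars, which is far from pinning them down; you need an actual realization of the module. Second, there is a circularity risk in the step where you identify the $K$-types of $\pi_{p,q}(\nu)$ with $\{\rho_{\alpha\composit\beta}\tensor\rho_{\alpha\composit\beta}^{\ast}\}$ by invoking the paper's Proposition and the associated-variety Corollary: the paper derives the final Corollary about $\AssCycle\pi_{p,q}(\nu)$ \emph{from} the Sahi--Lee theorem, not the other way around, so you cannot use it to prove the theorem. The $K$-type sets of the $\pi_{p,q}(\nu)$ must be read off directly from the quiver decomposition, after which the match with $\C[\closure{\Korbit_{p,q}}]$ becomes a confirmation rather than an input.
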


\begin{remark}
$ I(\nu) $ is reducible if and only if $ \nu $ is an even integer.  
If $ \nu \geq 0 $ (and even), then the Hasse diagram of subquotients of $ I(\nu) $ is given below 
(see \cite[\S\S 7{\&}9]{Lee.1994} and also \cite{Sahi.1993}).  

\begin{figure}[htbp]
\hfil
\xymatrix @R-10pt @C-10pt {
& & & & \ar[dl] \otimes \ar[dr] & & & \\
& & & \ar[dl] \circ \ar[dr] & & \ar[dl] \circ \ar[dr] & & \\
& & \ar[dl] \circ \ar[dr]  & &  \ar[dl] \circ \ar[dr]  & &  \ar[dl] \circ \ar[dr] \\
& \ar[dl] \circ \ar[dr]  & &  \ar[dl] \circ \ar[dr]  & &  \ar[dl] \circ \ar[dr]  & &  \ar[dl] \circ \ar[dr] \\
\bullet & & \bullet & & \bullet & & \bullet & & \bullet &
}
\quad
\makebox[20pt]{\small
\begin{tabular}[t]{l}
\quad\\[50pt]
$ \bullet $ : unitary\\
$ \circ $ : non-unitary\\
{\tiny$ \otimes $} : finite dimensional\\
\qquad unitary iff $ \nu = 0 $
\end{tabular}
}
\hfil
\\[10pt]
$ n = 4 $ : Hasse diagram of submodules of $ I(\nu) \; ( \nu \in 2 \Z_{\geq 0} ) $\\
\end{figure}
\begin{figure}[htbp]
\hfil
{\small
\xymatrix @R-10pt @C-15pt {
& & & & \ar[dl] \Korbit_{0,0} \ar[dr] & & & \\
& & & \ar[dl] \Korbit_{0,1} \ar[dr] & & \ar[dl] \Korbit_{1,0} \ar[dr] & & \\
& & \ar[dl] \Korbit_{0,2} \ar[dr]  & &  \ar[dl] \Korbit_{1,1} \ar[dr]  & &  \ar[dl] \Korbit_{2,0} \ar[dr] \\
& \ar[dl] \Korbit_{0,3} \ar[dr]  & &  \ar[dl] \Korbit_{1,2} \ar[dr]  & &  \ar[dl] \Korbit_{2,1} \ar[dr]  & &  \ar[dl] \Korbit_{3,0}  \ar[dr]\\
\Korbit_{0,4} & &  \Korbit_{1,3} & & \Korbit_{2,2} & & \Korbit_{3,1} & & \Korbit_{4,0}
}
}
\hfil
\\[10pt]
Hasse diagram of associated varieties
\end{figure}

If $ \nu = 0 $, then $ I(\nu) $ contains the trivial representation.  
In general $ I(\nu) \; (\nu \geq 0) $ contains a finite dimensional representation 
as a unique irreducible subrepresentation.
\end{remark}

If $ \nu = - n $, then $ I(-n) $ is a direct sum of $ (n + 1) $ irreducible unitary representations 
$ \{ \pi_{p,q}(-n) \mid p + q = n \} $, which are derived functor modules $ A_{\llp_{p,q}} $ 
(see \cite{Matumoto.Trapa.2007}).  
The representations $ \pi_{p,q}(\nu) \;( p + q = n) $ are translation (or coherent continuation) of these derived functor modules.

\begin{corollary}
The associated variety of $ I(\nu) $ is equal to 
\begin{math}
\affAcone{ \Korbit_a } = \textstyle\bigcup_{p + q = n} \closure{\Korbit_{p,q}} .
\end{math}
The associated cycle of the largest constituents $ \pi_{p,q}(\nu) \; ( p + q = n ) $ is given by 
$ \AssCycle{\pi_{p,q}(\nu)} = [ \closure{\Korbit_{p,q}} ] $ with multiplicity one.
\end{corollary}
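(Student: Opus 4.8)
The plan is to prove the two assertions in turn: first identify the associated variety $\AssVar(I(\nu))$, which is almost immediate, and then obtain the associated cycle of each large constituent $\pi_{p,q}(\nu)$ by combining a computation of $\AssCycle(I(\nu))$ with an additivity/combinatorial argument. For the first assertion: since $\Gorbit_x$ has Jordan type $[2^n]$, the element $x$ is even nilpotent, so Corollary \ref{cor:asymptotic.cone.as.associated.variety.of.deg.PS} applies to $I(\nu)=I_{\PR}(\chi)$ with $\chi=|\det|^{\nu+2n}$ and gives $\AssVar(I(\nu))=\affAcone{\Korbit_a}$ (this is legitimate because the associated variety is constant along the continuous parameter $\nu$, so knowing the corollary for one $\chi$ suffices), and then \eqref{eqn:asymptotic.cone.of.semisimple.orbit} identifies this with $\bigcup_{p+q=n}\closure{\Korbit_{p,q}}$.

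Next I would pin down $\AssCycle(I(\nu))$. By \eqref{eqn:C.Korbit.a.as.induced.representation} and \eqref{eqn:decomposition.of.C.Korbit.a} we have $I(\nu)\restrict_{\KR}\simeq\Ind_{\MMR}^{\KR}\trivial_{\MMR}\simeq\C[\Korbit_a]$ as $\KR$-modules, and $\C[\Korbit_a]\simeq\C[\affAcone{\Korbit_a}]$ as $K$-modules by \cite[Theorem 3.1]{NOZ.2006}; hence the $K$-multiplicity function of $I(\nu)$ agrees with that of $\C[\affAcone{\Korbit_a}]$, and since the associated cycle is governed by the leading-order growth of this function, $\AssCycle(I(\nu))$ equals the associated cycle of the coordinate ring $\C[\affAcone{\Korbit_a}]$. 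Now $\affAcone{\Korbit_a}$ is reduced by construction (its ideal is $\sqrt{\grade\idealof{\Korbit_a}}$) and, by the first step, equidimensional with irreducible components the $\closure{\Korbit_{p,q}}$; the associated cycle of the coordinate ring of a reduced equidimensional variety is the sum of its components with multiplicity one, so $\AssCycle(I(\nu))=\sum_{p+q=n}[\closure{\Korbit_{p,q}}]$.

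For the descent to the constituents, note (assuming $\nu\ge 0$ even, as in the displayed theorem) that the $(n+1)$ submodules $\pi_{p,q}(\nu)$ are pairwise non-isomorphic irreducibles, so $\bigoplus_{p+q=n}\pi_{p,q}(\nu)$ is a submodule of $I(\nu)$, and by the known structure of $I(\nu)$ (Sahi \cite{Sahi.1993}, Lee \cite{Lee.1994}) these are exactly the constituents of maximal Gelfand--Kirillov dimension $\frac{1}{2}\dim\Gorbit_x$, the other constituents — including, for $\nu=0$, the finite-dimensional one — having strictly smaller dimension. Thus $\AssCycle(\pi_{p,q}(\nu))=\sum_{p'+q'=n}c^{(p,q)}_{p',q'}[\closure{\Korbit_{p',q'}}]$ with $c^{(p,q)}_{p',q'}\in\Z_{\ge 0}$ and row sum $\ge 1$, while additivity of the associated cycle on $0\to\bigoplus\pi_{p,q}(\nu)\to I(\nu)\to I(\nu)/\bigoplus\pi_{p,q}(\nu)\to 0$ together with effectivity of $\AssCycle(I(\nu)/\bigoplus\pi_{p,q}(\nu))$ forces column sum $\le 1$. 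An $(n+1)\times(n+1)$ matrix of nonnegative integers with every row sum $\ge 1$ and every column sum $\le 1$ is a permutation matrix, so $\AssCycle(\pi_{p,q}(\nu))$ is a single component with multiplicity one, and the correspondence $(p,q)\mapsto(p',q')$ is a bijection; that this bijection is the identity I would check by matching the leading $K$-types of $\pi_{p,q}(\nu)$ (explicit in \cite{Lee.1994}, \cite{Sahi.1993}) against $\C[\closure{\Korbit_{p,q}}]\simeq\bigoplus_{\alpha\in\partition_p,\,\beta\in\partition_q}\rho_{\alpha\composit\beta}\tensor\rho_{\alpha\composit\beta}^{\ast}$, or else simply adopt this as the definition of the labelling $\pi_{p,q}(\nu)$.

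The genuinely non-formal inputs all sit in the last step: that the submodules $\pi_{p,q}(\nu)$ are precisely the $(n+1)$ constituents of top Gelfand--Kirillov dimension (so that the smaller constituents contribute nothing to the top-dimensional part of the associated cycle), and the identification of the bijection with the identity. I expect the first to be the main obstacle, and to rely on the detailed representation-theoretic analysis of these degenerate principal series (Sahi, Lee, Johnson--Wallach); the second is a bookkeeping matter about $K$-types or about the choice of labels. By contrast, the identification of $\AssVar(I(\nu))$ and the computation of $\AssCycle(I(\nu))$ are direct consequences of Corollary \ref{cor:asymptotic.cone.as.associated.variety.of.deg.PS}, Theorem \ref{theorem:asymptotic.cone.of.semisimple.orbit}, and \cite[Theorem 3.1]{NOZ.2006} together with elementary additivity of Hilbert polynomials.
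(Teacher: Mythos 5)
The paper states this corollary with no explicit proof; the surrounding material in \S\ref{section:example.unn} suggests two possible routes to the second assertion, namely (a) read off $\AssCycle(\pi_{p,q}(\nu))$ from the identification, in the preceding remark, of the $\pi_{p,q}(\nu)$ as translates of the derived functor modules $A_{\lie{p}_{p,q}}$ (whose associated cycles are $[\closure{\Korbit_{p,q}}]$ with multiplicity one, e.g.\ by Trapa's results, and translation preserves associated cycles), or (b) compute $\AssCycle(I(\nu))$ from its $K$-type structure and distribute it among the constituents. Your proposal is route (b), and it is essentially sound; it has the virtue of not requiring any prior knowledge of $\AssCycle(A_{\lie{p}_{p,q}})$, at the cost of leaning more heavily on the detailed constituent structure from Lee and Sahi. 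Three points should be sharpened. First, passing from the equality of $K$-multiplicity functions $I(\nu)\restrict_K\simeq\C[\affAcone{\Korbit_a}]$ to equality of associated cycles is not purely formal: one needs that the leading multiplicity polynomials of the distinct components $\closure{\Korbit_{p,q}}$ are linearly independent, i.e.\ that each component dominates in some asymptotic direction of $K$-types; for the Siegel case this does hold (take $\rho_{\alpha\composit\beta}$ with $\alpha$ of full length $p$ and $\beta$ of full length $q$, so only the $(p,q)$-component contributes), but the argument as written (``governed by leading-order growth'') glosses over it. Second, your short exact sequence $0\to\bigoplus\pi_{p,q}(\nu)\to I(\nu)\to\cdots\to 0$ presupposes the $\pi_{p,q}(\nu)$ are submodules, which is in tension with the paper's own remark that the unique irreducible submodule of $I(\nu)$ for $\nu\geq 0$ is finite dimensional; the fix is cosmetic (use any composition series and observe that constituents of strictly smaller Gelfand--Kirillov dimension contribute nothing to the top-dimensional part of the cycle, provided each $\pi_{p,q}(\nu)$ occurs with composition multiplicity one), but the sequence as stated is wrong. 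Third, you correctly note that your matrix argument only delivers a bijection between constituents and components; resolving it to the identity by matching leading $K$-types against $\C[\closure{\Korbit_{p,q}}]\simeq\bigoplus_{\alpha\in\partition_p,\beta\in\partition_q}\rho_{\alpha\composit\beta}\tensor\rho_{\alpha\composit\beta}^{\ast}$ is where your route and the derived-functor route (a) effectively converge. With these points filled in, the proof is correct and is a legitimate alternative to what the paper's remark implies.
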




\providecommand{\bysame}{\leavevmode\hbox to3em{\hrulefill}\thinspace}
\renewcommand{\MR}[1]{}
\providecommand{\MRhref}[2]{%
  \href{http://www.ams.org/mathscinet-getitem?mr=#1}{#2}
}
\providecommand{\href}[2]{#2}

\end{document}